\newtheorem{theorem}{Theorem}
\newtheorem{lemma}[theorem]{Lemma}
\newtheorem{claim}[theorem]{Claim}
\newtheorem{obs}[theorem]{Observation}
\newtheorem{prop}[theorem]{Proposition}
\newcommand\ex{\ensuremath{\mathrm{ex}}}
\title{Extremal results for Berge-hypergraphs}
\date{}
\author{
D\'aniel Gerbner\thanks{Hungarian Academy of Sciences, Alfr\'ed R\'enyi Institute of Mathematics, P.O.B. 127, Budapest H-1364, Hungary. Research supported by OTKA grant PD-109537.}
\and
Cory Palmer\thanks{Department of Mathematical Sciences,
University of Montana, Missoula, Montana 59812, USA. Research supported by University of Montana UGP grant 325341.}
}
\begin{document}

\maketitle

\begin{abstract}
Let $G$ be a graph and $\mathcal{H}$ be a hypergraph both on the same vertex set. We say that a hypergraph $\mathcal{H}$ is a \emph{Berge}-$G$ if there is a bijection
$f : E(G) \rightarrow  E(\mathcal{H})$ such that for $e \in E(G)$ we have $e \subset f(e)$.
This generalizes the established definitions of ``Berge path'' and ``Berge cycle'' to general graphs.
For a fixed graph $G$ we examine the maximum possible size (i.e.\ the sum of the cardinality of each edge) of a hypergraph with no Berge-$G$ as a subhypergraph. 
In the present paper we prove general bounds for this maximum when $G$ is an arbitrary graph. We also consider the specific case when $G$ is a complete bipartite graph and
prove an analogue of the K\H ov\'ari-S\'os-Tur\'an theorem.
\end{abstract}
\section{Introduction}

Let $G$ be a graph and $\mathcal{H}$ be a hypergraph both on the same vertex set. We say that the hypergraph $\mathcal{H}$ is a \emph{Berge}-$G$ if there is a bijection
$f : E(G) \rightarrow  E(\mathcal{H})$ such that for $e \in E(G)$ we have $e \subset f(e)$. In other words, given a graph $G$ we can construct a Berge-$G$ by replacing each edge of $G$ with a hyperedge that contains it. Alternatively, a hypergraph $\mathcal{H}$ is a Berge-$G$ if each hyperedge $h$ in $\mathcal{H}$ can be mapped to two vertices contained in $h$ such that the resulting graph is $G$ (where we are allowed to ignore extra isolated vertices).
Note that when $G$ is a cycle, this definition corresponds to the established definitions of ``Berge path'' and ``Berge cycle.'' Note that for a fixed graph $G$, a Berge-$G$ is a class of hypergraphs.

We say that a hypergraph $\mathcal{H}$ \emph{contains} the graph $G$ if $\mathcal{H}$ has a Berge-$G$ as a subhypergraph. If $\mathcal{H}$ contains no $G$, then we say that $\mathcal{H}$ is $G$-\emph{free}.
We would like to examine the maximum number of edges possible in a hypergraph that is $G$-free and has $n$ vertices. Throughout most of this paper we allow our hypergraphs to include multiple copies of the same hyperedge (multi-hyperedges). 
A hypergrpah is \emph{simple} if there are no duplicate hyperedges.
For ease of notation we consider a hypergraph $\mathcal{H}$ as a family of edges. Thus $h \in \mathcal{H}$ means $h$ is a hyperedge of $\mathcal{H}$ and $|\mathcal{H}|$ is the number
of hyperedges in $\mathcal{H}$. 

In \cite{GyKaLe}, Gy\H ori, Katona, and Lemons proved the following analogue of the Erd\H os-Gallai theorem \cite{ErGa} for Berge paths.

\begin{theorem}[Gy\H ori, Katona, Lemons \cite{GyKaLe}]
Let $P_{k+1}$ be a path on $k+1$ vertices.
Suppose $\mathcal{H}$ is a $P_{k+1}$-free $m$-uniform hypergraph on $n$ vertices and $m >2$. If $k>m$, then
$$|\mathcal{H}| \leq \frac{n}{k} \binom{k}{m}.$$
If $k \leq m$, then
$$|\mathcal{H}| \leq \frac{n(k-1)}{m+1}.$$
\end{theorem}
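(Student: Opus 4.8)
The plan is to prove the two upper bounds; the matching constructions fix the coefficients and guide the argument. For $k>m$, the disjoint union of $n/k$ complete $m$-uniform hypergraphs on $k$ vertices is $P_{k+1}$-free (each block spans only $k$ vertices, too few for a Berge path on $k+1$ vertices) and has exactly $\frac nk\binom km$ edges; for $k\le m$, the disjoint union of $n/(m+1)$ blocks, each an $(m+1)$-set carrying any $k-1$ of its $m$-subsets, is $P_{k+1}$-free (a Berge-$P_{k+1}$ needs $k$ distinct hyperedges, but a block has only $k-1$) and has $\frac{(k-1)n}{m+1}$ edges. Since both bounds are linear in the number of vertices, and taking $N$ disjoint copies of a single connected hypergraph shows the connected case is in fact equivalent to the general one, I would first reduce to a single Berge-connected component $\mathcal C$ on $p$ vertices with $s$ edges and prove the corresponding per-vertex density bound there; summing over the components of $\mathcal H$ then finishes the proof.

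In $\mathcal C$ I take a longest Berge path $v_1\,e_1\,v_2\cdots e_t\,v_{t+1}$ and write $P=\{v_1,\dots,v_{t+1}\}$; since $\mathcal C$ is $P_{k+1}$-free we have $t\le k-1$, so $|P|\le k$. The engine in both regimes is maximality: no hyperedge outside $\{e_1,\dots,e_t\}$ may join an endpoint of the path to a vertex off $P$, and a Berge analogue of P\'osa's rotation lemma turns the two fixed endpoints into a whole set of reachable endpoints with the same property. For $k\le m$ this already forces $\mathcal C$ to be very small. I would prove the clean statement that a Berge-connected $m$-uniform hypergraph with at least $k$ edges (when $k\le m$) already contains a Berge-$P_{k+1}$: if the longest Berge path had only $t\le k-1$ edges, then connectivity together with rotations yields an unused hyperedge $e$ meeting an endpoint of a rotated copy of the path, and since $|e|=m\ge k>t$ while the path has only $t+1\le k\le m$ core vertices, $e$ must contain a vertex off $P$, allowing an extension and contradicting maximality (the only tight spot is $k=m$, treated directly). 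Hence a $P_{k+1}$-free component has $s\le k-1$ edges; as any component with $s\ge2$ edges spans at least $m+1$ vertices, we get $\frac sp\le\frac{k-1}{m+1}$, exactly the target density.

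For $k>m$ the situation is genuinely different, since the complete $m$-graph on $k$ vertices is itself a connected $P_{k+1}$-free hypergraph with $\binom km\gg k$ edges; here the edge count cannot be bounded by a constant, only the density can. In this regime the rotation structure must be used to charge every hyperedge of $\mathcal C$ to the at most $k$ core vertices of a maximal Berge path through it, so that the edges are supported on clusters of at most $k$ vertices, each carrying at most $\binom km$ hyperedges; dividing by the $k$ vertices of a cluster yields the density $\frac1k\binom km=\frac1m\binom{k-1}{m-1}$, and summing gives $|\mathcal H|\le\frac nk\binom km$.

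I expect the main obstacle to be exactly this charging step for $k>m$: P\'osa rotations are delicate for Berge paths because a single hyperedge can realise several graph edges and can carry vertices off the path, so the bookkeeping that pins almost every edge to a $k$-vertex core, and counts each core configuration only once, is where the real difficulty lies. By contrast, the $k\le m$ counting and the threshold arithmetic are comparatively routine once the extension lemma above is in hand.
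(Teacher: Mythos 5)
The two extremal constructions you give are correct, and so is the reduction principle that the edges of a Berge path form a Berge-connected family, so componentwise bounds would suffice. But the engine of your $k\le m$ case --- the ``clean statement'' that a Berge-connected $m$-uniform hypergraph with at least $k$ edges contains a Berge-$P_{k+1}$, hence that every $P_{k+1}$-free component has at most $k-1$ edges --- is false. Consider the loose star (a sunflower with one-vertex core): hyperedges $e_i=\{x\}\cup S_i$ for $i=1,\dots,t$, where the $S_i$ are pairwise disjoint $(m-1)$-sets. This hypergraph is simple, $m$-uniform, Berge-connected, and has arbitrarily many edges, yet it contains no Berge-$P_4$: in a Berge-$P_4$ with vertices $v_1v_2v_3v_4$ and distinct edges $f_1,f_2,f_3$, we would need $v_2\in f_1\cap f_2=\{x\}$ and $v_3\in f_2\cap f_3=\{x\}$, forcing $v_2=v_3$. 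Hence for every $k\ge 3$ (in particular for $k=m=3$) it is $P_{k+1}$-free with $t\gg k-1$ edges, so the inequality $s\le k-1$ for components, and with it your density derivation $s/p\le (k-1)/(m+1)$, collapses. The example also pinpoints where your rotation sketch breaks: Berge-connectivity only guarantees an unused hyperedge meeting the path \emph{somewhere}, and here every unused hyperedge meets every (rotated) longest path only in the interior vertex $x$, never at an endpoint, so no extension is available. Note that the loose star does satisfy the theorem's bound (its density tends to $1/2=(k-1)/(m+1)$ when $k=m=3$), which shows the extremal components can have unboundedly many edges; any correct proof must therefore bound large components directly rather than bound their number of edges by a constant. (A smaller issue: isolated single-edge components have density $1/m$, which exceeds $(k-1)/(m+1)$ when $k=2$, so the componentwise statement needs $k\ge 3$.)

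The $k>m$ regime is in worse shape: you acknowledge that the charging step is the real difficulty, and as described --- disjoint clusters of at most $k$ vertices, each carrying at most $\binom{k}{m}$ hyperedges --- it fails on the same example. The loose star is $P_{k+1}$-free for all $k\ge 3$, every maximal Berge path in it has only three core vertices, and every hyperedge passes through $x$, so the cores of maximal paths through different hyperedges all overlap in $x$ and cannot serve as disjoint clusters; indeed no partition of the vertex set into parts of size at most $k$ has every hyperedge inside a part. So neither regime is actually proved, and the missing ingredient is not bookkeeping but a genuinely different idea. For completeness: the paper you are responding to does not prove this statement at all --- it is quoted from Gy\H{o}ri, Katona, and Lemons --- and the proof in that reference is substantially more involved, necessarily so, since (as the sunflower shows) connected $P_{k+1}$-free hypergraphs can be arbitrarily large in both regimes.
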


Gy\H{o}ri and Lemons \cite{GyLe4} proved that if an $m$-uniform hypergraph on $n$ vertices is $C_{2k}$-free and $m \ge 3$, then it contains at most $O(n^{1+1/k})$ edges, which matches the bound found in the graph case (see the even cycle theorem of Bondy and Simonovits \cite{BoSi}). They also showed the surprising fact that the maximum number of edges in a $C_{2k+1}$-free $m$-uniform $n$-vertex hypergraph is also $O(n^{1+1/k})$ which is significantly different from the graph case. Moreover, they proved 

\begin{theorem}[Gy\H{o}ri, Lemons \cite{GyLe4}]\label{cycle-free}
Suppose $\mathcal{H}$ is either $C_{2k}$-free or $C_{2k+1}$-free hypergraph on $n$ vertices. If every hyperedge in $\mathcal{H}$ has size at least $4k^2$, then
$$\sum_{v \in V(\mathcal{H})} d(v) = O(n^{1+1/k}).$$
\end{theorem}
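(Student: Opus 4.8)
The plan is to pass to the \emph{incidence bipartite graph} $I = I(\mathcal{H})$, whose two vertex classes are $A = V(\mathcal{H})$ (of size $n$) and $B = E(\mathcal{H})$, with $a \in A$ joined to $h \in B$ exactly when $a \in h$. The key observation is that a Berge-$C_\ell$ in $\mathcal{H}$ is precisely a cycle of length $2\ell$ in $I$: a cyclic alternating sequence $v_1 h_1 v_2 h_2 \cdots v_\ell h_\ell$ in $I$ has all $v_i$ and all $h_i$ distinct (it is a cycle) and satisfies $\{v_i, v_{i+1}\} \subseteq h_i$, which is exactly the definition of a Berge-$C_\ell$, and conversely. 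Hence $\mathcal{H}$ is Berge-$C_{2k}$-free iff $I$ is $C_{4k}$-free, and Berge-$C_{2k+1}$-free iff $I$ is $C_{4k+2}$-free. Moreover $\sum_{v} d(v) = \sum_{h}|h| = |E(I)|$, and the hypothesis ``every hyperedge has size at least $4k^2$'' becomes ``every vertex of $B$ has degree at least $D := 4k^2$ in $I$.'' So it suffices to prove: a bipartite graph $I$ with parts of sizes $n$ and $m$, with $\min_{b \in B}\deg(b) \ge D$ and no $C_{4k}$ (resp.\ no $C_{4k+2}$), satisfies $|E(I)| = O(n^{1+1/k})$.

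For the even case I would run a Bondy--Simonovits-style path count, but counting only paths whose two endpoints lie in $A$, so that the resulting bound depends on $n$ alone rather than on $n+m$. Let $e = |E(I)|$ and let $N$ be the number of paths in $I$ with exactly $2k$ edges and both endpoints in $A$; such a path has the form $a_0 h_1 a_1 \cdots h_k a_k$ with $k$ vertices in $B$ and $k+1$ in $A$. The upper bound comes from $C_{4k}$-freeness: two internally disjoint $2k$-paths joining the same pair of $A$-vertices would form a $C_{4k}$, and the Bondy--Simonovits counting lemma upgrades this to $N = O_k(n^2)$. For the lower bound I would first discard $A$-vertices of below-average degree (losing at most half the edges, and reducing to the case $e/n$ large, since otherwise $e = O(n)$ already), and then build $2k$-paths greedily: at each $B$-vertex there are at least $D - 2k$ choices and at each surviving $A$-vertex at least about $\tfrac{e}{2n} - 2k$ choices, so collisions among the at most $2k$ vertices of the path can always be avoided. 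This is exactly where $D = 4k^2 \gg 2k$ is used. The two estimates give
$$n \left(\frac{eD}{n}\right)^{k} \lesssim N \lesssim n^2,$$
which rearranges to $e = O(n^{1+1/k})$.

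For the odd case the same scheme applies to paths with $2k+1$ edges, whose endpoints now lie one in $A$ and one in $B$; two internally disjoint such paths between a fixed pair $(a,b)$ create a $C_{4k+2}$. The new difficulty is that there are $n \cdot m$ endpoint pairs rather than $n^2$, reintroducing the size $m = |\mathcal{H}|$ of the hyperedge side. Here the large-edge hypothesis is decisive: since $e \ge D\,m$ we have $m \le e/D$, and feeding this into the analogous inequality
$$n\left(\frac{eD}{n}\right)^{k}\frac{e}{n} \lesssim nm \lesssim \frac{ne}{D}$$
again yields $e = O(n^{1+1/k})$. This is the technical reflection of the phenomenon noted in the excerpt: odd Berge cycles behave like even ones precisely because the largeness of the hyperedges forces few of them, neutralizing the extra factor that renders odd cycles harmless in ordinary graphs.

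The main obstacle is the upper bound in each case, namely the Bondy--Simonovits even-cycle counting: passing from ``few \emph{internally disjoint} paths between a pair'' to ``few paths in total'' requires controlling families of paths that share internal vertices (the theta-graph analysis), and this is the delicate classical core of the argument. By comparison the greedy lower bound and the initial regularization are routine, the role of the constant $4k^2$ being simply to exceed the number of vertices on a single path so that simple (non-self-intersecting) paths can always be extended.
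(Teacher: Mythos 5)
First, a point of order: the paper does not prove this statement at all --- it is quoted from Gy\H{o}ri--Lemons \cite{GyLe4} as background --- so there is no in-paper proof to compare against, and your proposal has to stand on its own. Its first half does: the passage to the incidence bipartite graph $I$ with classes $A = V(\mathcal{H})$, $B = \mathcal{H}$ is correct and clean (a Berge-$C_\ell$ in $\mathcal{H}$ is exactly a $C_{2\ell}$ in $I$, $\sum_v d(v) = |E(I)|$, and the size hypothesis becomes a minimum-degree condition on $B$), and the final arithmetic --- combining a greedy path count from below with a claimed $O_k(n^2)$ (resp.\ $O_k(nm)$, then $m \le e/D$) bound from above --- would indeed yield $e = O(n^{1+1/k})$ if both counts were established.

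The genuine gap is the upper bound on $N$, and it is not a deferrable technicality: the claim that $C_{4k}$-freeness alone forces the total number of $2k$-edge paths with endpoints in $A$ to be $O_k(n^2)$ is false. Take $k \ge 3$, vertices $u = w_0, w_1, \ldots, w_{k-1}, w_k = v$, and pairwise disjoint sets $X_0, \ldots, X_{k-1}$ of size $t$, with every vertex of $X_i$ joined to $w_i$ and $w_{i+1}$. This graph is bipartite, and since each $w_i$ is a cut vertex every cycle lies inside one block $K_{2,t}$, so the only cycles are $C_4$'s; in particular the graph is $C_{4k}$-free. Yet there are $t^k \approx (n/k)^k \gg n^2$ paths of length $2k$ from $u$ to $v$. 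So ``two internally disjoint $2k$-paths between the same pair give a $C_{4k}$'' cannot be upgraded to a bound on \emph{all} paths by any general lemma: there is no ``Bondy--Simonovits counting lemma'' of this kind (Bondy--Simonovits is a breadth-first-search argument bounding edges, and Faudree--Simonovits bounds edges of theta-free graphs --- neither bounds total path counts). Note also that in this counterexample the $B$-side degrees are $2$, and blowing up the $w_i$'s to restore $\deg_B \ge 4k^2$ creates cycles of length $4k$; this shows that any correct proof must use the largeness of the hyperedges \emph{inside} the upper-bound counting step, whereas in your outline the degree hypothesis enters only the greedy lower bound and the odd-case bookkeeping. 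That is precisely the missing idea, and it is the substance of the theorem. (A secondary, fixable flaw: discarding low-degree $A$-vertices in one pass can leave a $B$-vertex with no surviving neighbors, invalidating ``at each $B$-vertex there are at least $D - 2k$ choices''; the regularization must delete iteratively on both sides, using $e \ge Dm$ to control the edge loss from $B$-deletions.)
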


The case when all hyperedges are of size $3$ was addressed by Gy\H ori and Lemons \cite{GyLe2} and recently improved by F\"uredi and \"Ozkahya \cite{FuOz}.

Throughout the paper when considering non-uniform hypergraphs $\mathcal{H}$ we are interested in the value of $\sum_{v \in V(\mathcal{H})} d(v) = \sum_{h \in \mathcal{H}} |h|$ instead of $|\mathcal{H}|$. Note that when a hypergraph is uniform than the two parameters can easily be computed from each other.
We examine the size of hypergraphs that are $F$-free when $F$ is a general graph and when $F$ is a complete bipartite graph. 

\begin{theorem}\label{general} 
Let $F$ be a graph and let $\mathcal{H}$ be an $F$-free hypergraph on $n$ vertices. If every hyperedge in $\mathcal{H}$ has size at least $|V(F)|$, then 
$$\sum_{v \in V(\mathcal{H})}d(v) = O(n^2).$$
Furthermore, there exists a $K_r$-free hypergraph $\mathcal{H}$ such that each edge has size $r$ and
$$\sum_{v \in V(\mathcal{H})} d(v) = \Omega(n^2).$$
\end{theorem}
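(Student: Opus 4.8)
The plan is to treat the two statements separately, starting with the construction, which is the more concrete of the two.

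For the lower bound I would build an explicit $K_r$-free hypergraph in the spirit of the extremal example for Berge triangles. Split the vertex set as $V=X\cup Z$ with $|X|=|Z|=n/2$, partition $X$ into pairwise disjoint ``cores'' $C_1,\dots,C_p$ each of size $r-1$ (so $p=\lfloor |X|/(r-1)\rfloor$), and take as hyperedges all sets of the form $C_i\cup\{z\}$ with $1\le i\le p$ and $z\in Z$. Each hyperedge has size exactly $r$, there are $p|Z|=\Theta(n^2)$ of them, and hence $\sum_{v}d(v)=r\,p|Z|=\Omega(n^2)$. To see that $\mathcal H$ is $K_r$-free, note that two vertices of $Z$ never lie in a common hyperedge, two vertices of distinct cores never lie in a common hyperedge, and a pair consisting of a core vertex $c\in C_i$ and a petal $z\in Z$ lies only in the single hyperedge $C_i\cup\{z\}$. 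Consequently the only $r$-sets all of whose pairs are covered are exactly the hyperedges $W=C_i\cup\{z\}$ themselves; but in such a $W$ every one of the $r-1$ pairs $\{c,z\}$ with $c\in C_i$ is contained only in the single hyperedge $C_i\cup\{z\}$, so these $r-1\ge 2$ pairs cannot be assigned distinct hyperedges and Hall's condition fails. Thus no Berge-$K_r$ exists.

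For the upper bound write $k=|V(F)|$ and $m=|E(F)|$. First I would reduce to simple hypergraphs: if one hyperedge (of size $\ge k$) occurs with multiplicity at least $m$, then choosing any $k$ of its vertices as the image of $V(F)$ and using $m$ distinct copies for the $m$ edges already produces a Berge-$F$; hence in an $F$-free $\mathcal H$ every edge has multiplicity at most $m-1$, and it suffices to bound the underlying simple hypergraph up to the constant factor $m$. Next, form the auxiliary graph $\Gamma$ on $V(\mathcal H)$ whose edges are the pairs $\{u,v\}$ with codegree $d(u,v)\ge m$. If $\Gamma$ contained a copy of $F$, then each of its $m$ edges would have at least $m$ hyperedges available, Hall's condition would hold trivially, and a system of distinct representatives would yield a Berge-$F$; therefore $\Gamma$ is $F$-free and $|E(\Gamma)|\le \ex(n,F)=O(n^2)$.

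It remains to pass from $\Gamma$ to the size sum. Using $|h|\ge k$ one has $\binom{|h|}2\ge \tfrac{k-1}2|h|$, so $\sum_{v}d(v)=\sum_{h}|h|\le \frac{2}{k-1}\sum_{h}\binom{|h|}2=\frac{2}{k-1}\sum_{\{u,v\}}d(u,v)$. The contribution of the pairs with $d(u,v)<m$ is at most $m\binom n2=O(n^2)$, so everything reduces to showing $\sum_{\{u,v\}\in E(\Gamma)}d(u,v)=O(n^2)$. This is the step I expect to be the main obstacle: a single pair may have enormous codegree, and a large hyperedge contributes to many such pairs at once, so the trivial bound $|E(\Gamma)|\le\binom n2$ is useless here. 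I would attack it by a sunflower-type analysis of the hyperedges through a high-codegree pair: if too many of the $\ge m$ hyperedges containing a fixed pair $\{u,v\}$ shared a further vertex, or if high-codegree pairs clustered on a common vertex, one could embed an edge of $F$ on $\{u,v\}$ and route the remaining edges of $F$ through the overlaps to build a Berge-$F$. Ruling this out forces the high-codegree pairs and their hyperedges into an almost-disjoint, sunflower-like pattern whose total codegree is $O(n^2)$ --- exactly the matching-of-cores structure that governs the extremal construction above. Combining the three contributions then gives $\sum_v d(v)=O(n^2)$.
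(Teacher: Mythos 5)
Your lower-bound construction is correct and is essentially the paper's own: the paper likewise blows up one side of a bipartite $K_r$-free graph into cores of size $r-1$ and kills every potential Berge-$K_r$ by the observation that the $r-1$ core-to-petal pairs all lie in a single hyperedge, so they cannot receive distinct hyperedges. No issue there.

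The upper bound, however, has a genuine gap, and it sits exactly where you flagged it: the claim $\sum_{\{u,v\}\in E(\Gamma)}d(u,v)=O(n^2)$ is never proved. This is not a routine missing detail. Given your (correct) steps 1--3, that claim is equivalent to $\sum_h\binom{|h|}{2}=O(n^2)$, which is \emph{stronger} than the theorem you are trying to prove, so nothing short of a full argument will do. Moreover, the sunflower heuristic does not obviously close it, because the quantity localizes inside single hyperedges: $\sum_{\Gamma}d(u,v)=\sum_h e_\Gamma(h)$, where $e_\Gamma(h)$ is the number of $\Gamma$-pairs inside $h$. Since $h$ itself covers every pair of its vertices, the only constraint your SDR argument extracts within $h$ is that $\Gamma$ restricted to $h$ contains no copy of $F$ minus one edge (assign the missing edge to $h$, route the other $m-1$ edges greedily through their $\geq m$ hyperedges). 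For $F=K_3$ this is decisive: $K_3$ minus an edge is a path, so $\Gamma|_h$ is a matching, $e_\Gamma(h)\le |h|/2$, and your inequality becomes self-improving and finishes the proof. But already for $F=K_5$, every $F-e$ contains $K_4$, and complete tripartite graphs contain no $K_4$; hence the within-hyperedge constraint permits $\Theta(|h|^2)$ high-codegree pairs inside a single hyperedge, and your chain of inequalities degenerates to $\sum_h|h|\le O(n^2)+c\sum_h|h|^2$, which is vacuous. Ruling out such dense patterns requires genuinely global information that the proposal does not supply.

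The paper's proof avoids codegrees altogether, and the contrast shows what is missing. Instead of counting all $\binom{|h|}{2}$ pairs in a hyperedge, it embeds into each hyperedge $h$ a \emph{matching} of size $\frac{1}{2}(|h|-r^3)$, with all matchings pairwise edge-disjoint. Matchings coming from distinct hyperedges form a proper edge-coloring, so if the greedy embedding ever gets stuck at $h$, then $h$ contains more than $r^3$ vertices spanning a properly edge-colored complete graph, which by Babai's anti-Ramsey lemma (Lemma~\ref{rainbow}) contains a rainbow $K_r$ and hence a rainbow $F$, i.e., a Berge-$F$. Therefore $\sum_h\bigl(|h|-r^3\bigr)\le 2\binom{n}{2}$, and combining with $|\mathcal{H}|\le \ex(n,F)$ (Lemma~\ref{anygraph}) gives $O(n^2)$. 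Working with matchings (linear in $|h|$) rather than pairs (quadratic in $|h|$) is what keeps the global count at $\binom{n}{2}$, and the rainbow lemma is the idea your step 4 lacks.
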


%

The K\H{o}v\'ari-S\'os-Tur\'an theorem \cite{KoSoTu} gives an upper-bound on the Tur\'an number (extremal number) of the complete bipartite graph.

\begin{theorem}[K\H{o}v\'ari, S\'os, Tur\'an \cite{KoSoTu}]\label{KST}
If $G$ is a $K_{s,t}$-free graph on $n$ vertices and $s \leq t$, then there is a constant $C$ depending on $s$ and $t$ such that
$$|E(G)| \leq C n^{2-1/s}.$$
\end{theorem}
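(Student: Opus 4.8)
The plan is to prove this by a double-counting argument applied to the ``stars'' $K_{s,1}$, that is, to pairs consisting of a vertex together with an $s$-element subset of its neighbourhood. Write $e = |E(G)|$ for the number of edges and $d(v)$ for the degree of a vertex $v$. The key quantity is
$$N = \sum_{v \in V(G)} \binom{d(v)}{s},$$
which counts the number of pairs $(v, S)$ where $S$ is an $s$-element set of vertices all adjacent to $v$. Counting the same quantity by first choosing $S$, we have $N = \sum_{S} c(S)$, where $c(S)$ denotes the number of common neighbours of the vertices in $S$ and the sum runs over all $\binom{n}{s}$ sets $S$ of size $s$. The $K_{s,t}$-freeness enters exactly here: if some $S$ had at least $t$ common neighbours, those $t$ vertices together with $S$ would span a copy of $K_{s,t}$. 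Hence $c(S) \le t-1$ for every $S$, and so $N \le (t-1)\binom{n}{s}$.

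The next step is to bound $N$ from below in terms of the average degree. Since $x \mapsto \binom{x}{s}$ is convex on $[s-1,\infty)$, Jensen's inequality gives $\sum_v \binom{d(v)}{s} \ge n \binom{2e/n}{s}$, where $2e/n$ is the average degree. Combining the two bounds yields
$$n \binom{2e/n}{s} \le (t-1)\binom{n}{s}.$$
Using $\binom{2e/n}{s} \ge \frac{(2e/n - s + 1)^s}{s!}$ on the left and $\binom{n}{s} \le \frac{n^s}{s!}$ on the right, this simplifies after clearing $s!$ to an inequality of the shape $(2e/n - s + 1)^s \le (t-1)\, n^{s-1}$, from which $e \le C n^{2 - 1/s}$ follows for a suitable constant $C = C(s,t)$ after taking $s$-th roots and rearranging.

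The main technical obstacle is the convexity step, which is only valid once the degrees are at least $s-1$; vertices of degree smaller than $s$ contribute zero to $N$, so Jensen's inequality must be applied with some care, for instance by first discarding the low-degree vertices (which together carry only $O(n)$ edges) or by invoking a version of the convexity bound valid for all nonnegative real arguments. A second, purely bookkeeping, point is to confirm that the shift by $s-1$ and the passage from $\binom{n}{s}$ to $n^s/s!$ affect only the constant and the lower-order terms, so that the exponent $2 - 1/s$ is preserved. Everything else reduces to elementary manipulation of binomial coefficients.
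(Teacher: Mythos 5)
Your proposal is correct, but note that there is nothing in the paper to compare it against: Theorem~\ref{KST} is quoted as a known result from the literature (the K\H{o}v\'ari--S\'os--Tur\'an theorem) and the paper gives no proof of it, only uses it as a black box. What you have written is precisely the classical double-counting argument from the original paper and standard textbooks: count pairs $(v,S)$ with $S$ an $s$-subset of $N(v)$, bound the count above by $(t-1)\binom{n}{s}$ via $K_{s,t}$-freeness, and below by $n\binom{2e/n}{s}$ via convexity. The one genuine subtlety --- that $x \mapsto \binom{x}{s}$ must be extended (e.g.\ by $0$ on $[0,s-1)$, which keeps it convex) or the low-degree vertices discarded before applying Jensen, and that the case $2e/n < s-1$ gives $e = O(n)$ trivially --- is exactly the point you flag, and either of the fixes you name closes it. So the proof is complete in outline and sound; it is the standard proof of the cited theorem rather than an alternative to anything in the paper.
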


We prove the following analogue for $K_{s,t}$-free hypergraphs.

\begin{theorem}\label{bipar}
If $\mathcal{H}$ is a $K_{s,t}$-free hypergraph, $s \le t$ and every hyperedge in $\mathcal{H}$ has size at least $s+t$, then 
$$\sum_{v \in V(\mathcal{H})}d(v) = O(n^{2-1/s}).$$
Furthermore, there exists a $K_{s,t}$-free hypergraph $\mathcal{H}$ such that each edge has size $s+t$ and
$$\sum_{v \in V(\mathcal{H})}d(v) = \Omega( \ex(n,K_{s,t})).$$
\end{theorem}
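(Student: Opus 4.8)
The plan is to prove the two halves of Theorem~\ref{bipar} separately: the upper bound is deduced from Theorem~\ref{KST} through an auxiliary graph, and the lower bound is obtained by expanding an extremal $K_{s,t}$-free graph into a hypergraph with edges of size $s+t$.

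For the upper bound, I would first replace each hyperedge $h$ by a fixed matching $M_h$ on its vertex set of size $\lfloor |h|/2\rfloor$, and let $\hat G$ be the (simple) graph whose edges are the pairs occurring in at least one $M_h$. Since every hyperedge has size at least $s+t\ge 2$, we have $\sum_h \lfloor |h|/2\rfloor \ge \tfrac14\sum_h |h| = \tfrac14\sum_{v}d(v)$, so up to the issue of multiplicities it suffices to bound $|E(\hat G)|$. The crux is the claim that $\hat G$ is $K_{s,s^2t}$-free. Suppose, for contradiction, that some $A=\{a_1,\dots,a_s\}$ has a common-neighborhood $\mathcal B$ in $\hat G$ with $|\mathcal B|\ge s^2t$. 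For each $b\in\mathcal B$ and each $i$ choose a hyperedge $\phi(a_i,b)$ with $\{a_i,b\}\in M_{\phi(a_i,b)}$; because $M_h$ is a matching, $b$ is matched to at most one vertex inside any single hyperedge, so the $s$ hyperedges $S_b=\{\phi(a_1,b),\dots,\phi(a_s,b)\}$ are pairwise distinct. For the same reason each hyperedge $h$ lies in at most $s$ of the sets $S_b$ (its matching contains at most one pair of the form $\{a_i,\cdot\}$ for each $i$). Forming the conflict graph on $\mathcal B$ in which $b\sim b'$ whenever $S_b\cap S_{b'}\neq\emptyset$, each vertex has degree at most $s(s-1)$, so a greedy argument yields an independent set of size at least $|\mathcal B|/(s^2-s+1)\ge t$; the corresponding $t$ vertices $b_1,\dots,b_t$ have pairwise disjoint sets $S_{b_j}$, which exhibits a Berge-$K_{s,t}$ on $A\cup\{b_1,\dots,b_t\}$ using the $st$ distinct hyperedges $\phi(a_i,b_j)$. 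This contradicts $K_{s,t}$-freeness, so $\hat G$ is indeed $K_{s,s^2t}$-free, and Theorem~\ref{KST} (with $s\le s^2t$) gives $|E(\hat G)|=O(n^{2-1/s})$.

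The step I expect to be the main obstacle is passing from $|E(\hat G)|$ back to $\sum_h|h|$, since the matchings may repeat pairs and $\sum_h\lfloor|h|/2\rfloor$ counts pairs with multiplicity. I would control this by choosing the matchings $M_h$ greedily, refusing any pair that has already been used more than a bounded number of times; this is feasible because each hyperedge offers $\lfloor(s+t)/2\rfloor$ disjoint pairs to work with. The instructive extreme case is a pair $\{u,v\}$ lying in very many hyperedges: if the remaining vertices of those hyperedges are disjoint one can reroute every matching away from $\{u,v\}$ (so no multiplicity is created), while if they overlap substantially one quickly builds a Berge-$K_{s,t}$; the honest verification that multiplicities can always be kept $O(1)$ per pair, equivalently a codegree-type estimate, is the technical heart of this direction.

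For the lower bound, I would start from an extremal $K_{s,t}$-free graph $G_0$ on a constant fraction of the vertices, with $\Theta(\ex(n,K_{s,t}))$ edges, and expand each edge $e$ to a hyperedge $h_e=e\cup P_e$ of size exactly $s+t$ by adjoining $s+t-2$ padding vertices drawn from the remaining vertices. Since each hyperedge has constant size, $\sum_v d(v)=(s+t)|E(G_0)|=\Omega(\ex(n,K_{s,t}))$ automatically, so the entire content is to choose the padding sets $P_e$ so that the hypergraph contains no Berge-$K_{s,t}$. The target invariant is that any Berge-$K_{s,t}$ projects to a $K_{s,t}$ in $G_0$: if the core pairs $\{a_i,b_j\}$ are all realized by base-base incidences, then each such pair is an edge of $G_0$ and we contradict the $K_{s,t}$-freeness of $G_0$. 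The obstacle is that the padding vertices must be reused (disjoint padding would force $\sum_v d(v)=\Theta(n)$, too small), and a reused padder can have large hyperdegree and might serve as a core vertex. I would therefore distribute the $P_e$ so that any two padders, and any base-padder pair, co-occur in very few hyperedges (e.g. via a random or partial-design assignment), which blocks padders from completing a $K_{s,t}$ and forces the projection above; making this avoidance argument rigorous is the main difficulty of the construction.
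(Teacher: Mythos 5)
Your upper bound contains one correct and genuinely different ingredient: the conflict-graph argument showing that the graph $\hat G$ of matched pairs is $K_{s,s^2t}$-free is a valid, more elementary alternative to the paper's anti-Ramsey lemma (Lemma~\ref{rainbowbipar}); the $st$ hyperedges $\phi(a_i,b_j)$ are indeed distinct, so it does exhibit a Berge-$K_{s,t}$. However, the step you defer as ``the technical heart'' --- that the matchings $M_h$ of size $\lfloor |h|/2\rfloor$ can be chosen with every pair used $O(1)$ times, ``equivalently a codegree-type estimate'' --- is not a technicality: it is false. Take the sunflower $h_i = C \cup \{x_i\}$, $i=1,\dots,N$, where $|C| = s+t-1$ and the $x_i$ are distinct. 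For $s \ge 2$ this hypergraph is $K_{s,t}$-free (each $x_i$ lies in a single hyperedge, while every core vertex of a Berge-$K_{s,t}$ has degree at least $s \ge 2$ and thus needs two distinct hyperedges through it, so all $s+t$ core vertices would have to lie in $C$, which has only $s+t-1$ vertices), and every hyperedge has size exactly $s+t$. Yet any matching of size $\lfloor (s+t)/2\rfloor \ge 2$ in $h_i$ uses at most one pair meeting $x_i$, hence at least one pair inside $C$; since $C$ spans only $\binom{s+t-1}{2} = O(1)$ pairs, some pair is reused $\Omega(N)=\Omega(n)$ times no matter how you choose the matchings. So codegrees in $K_{s,t}$-free hypergraphs can be linear in $n$, and no refusal scheme keeps multiplicities bounded. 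The paper's proof is exactly the repaired version of your plan: it insists on fully edge-disjoint matchings (multiplicity one) but only of size $\frac{1}{2}\left(|h| - s - s(s-1)(t-1) - t\right)$, sacrificing a constant number of vertices per hyperedge (so bounded-size hyperedges, like the sunflower's, contribute nothing and are handled separately); this constant per-hyperedge loss is affordable only because Lemma~\ref{anygraph} supplies the second estimate $|\mathcal{H}| \le \ex(n,K_{s,t}) = O(n^{2-1/s})$, an ingredient your write-up never invokes.

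Your lower bound is a plan rather than a proof, and its key invariant is insufficient. Low pairwise co-occurrence does not block a Berge-$K_{s,t}$: a Berge-$K_{s,t}$ whose core consists of $s$ base vertices $a_1,\dots,a_s$ and $t$ padders $p_1,\dots,p_t$ needs only \emph{one} hyperedge per pair $\{a_i,p_j\}$, so it coexists happily with every base--padder pair co-occurring in at most one hyperedge; what must be excluded is any $K_{s,t}$ in the base--padder incidence pattern admitting an injective choice of hyperedges. For the natural random implementation this fails: with $\Theta(\ex(n,K_{s,t})) = \Theta(n^{2-1/s})$ hyperedges each carrying $s+t-2$ random padders, a given base--padder incidence occurs with probability roughly $n^{-1/s}$, so the expected number of such $K_{s,t}$-patterns is of order $n^{s+t}\cdot n^{-t} = n^{s} \to \infty$; one would need an alteration or genuine design argument, and none is given. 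The paper avoids the issue entirely with a structured blow-up: pass to a bipartite subgraph $G'$ of the extremal graph with classes $A', B'$ keeping half the edges, replace each $a_i \in A'$ by a block $A_i$ of $s+t-1$ clones, and take the hyperedges $A_i \cup \{b\}$ for $a_i b \in E(G')$. Then padders never mix --- no hyperedge contains two vertices of $B'$ or meets two distinct blocks --- and a short case analysis shows a Berge-$K_{s,t}$ would force either a $K_{s,t}$ in $G'$ or a core vertex $z \in B'$ of core-degree at least $s \ge 2$ all of whose core edges must be assigned the single hyperedge $A_i \cup \{z\}$. (The paper also treats $s=1$ separately, which your sketch skips.)
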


The case when $s=t=2$ in Theorem~\ref{bipar} is of particular interest as $K_{2,2}$ is also a $C_4$. In \cite{GyLe3} Gy\H ori and Lemons examined the maximum possible value of the sum $\sum_{h \in \mathcal{H}}(|h|-3)$ for $\mathcal{H}$ a $C_4$-free hypergraph.
Observe that this sum is closely connected to the sum $\sum_{v \in V(\mathcal{H})}d(v)$ as $\sum_{h \in \mathcal{H}} (|h|-3) = \sum_{v \in V(\mathcal{H}}d(v) - 3|\mathcal{H}|$. Furthermore, note that the term $|h|-3$ in the sum is necessary as a hypergraph may have arbitrary many copies of an edge of size $3$ and still avoid a $C_4$. Our third main theorem involves this parameter,

\begin{theorem}\label{C4-free}
If $\mathcal{H}$ is a $C_4$-free hypergraph, then 
$$\sum_{h \in \mathcal{H}} (|h|-3) \leq \frac{\sqrt{6}}{2}n^{3/2} + O(n).$$ 
Furthermore, there exists a $C_4$-free hypergraph $\mathcal{H}$ such that 
$$\frac{1}{3\sqrt{3}}n^{3/2} + o(n^{3/2}) \leq \sum_{h \in \mathcal{H}} (|h|-3).$$
\end{theorem}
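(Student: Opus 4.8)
The plan is to treat the two bounds separately: the upper bound by a Berge analogue of the cherry-counting proof of the K\H ov\'ari--S\'os--Tur\'an theorem (Theorem~\ref{KST}), and the lower bound by an explicit girth-five construction.

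For the upper bound I would count \emph{Berge cherries}: a cherry is a center vertex $y$, two distinct hyperedges $h_1\neq h_2$ with $y\in h_1\cap h_2$, and endpoints $x\in h_1\setminus\{y\}$, $z\in h_2\setminus\{y\}$ with $x\neq z$. The key observation is that a Berge-$C_4$ is exactly a pair of cherries sharing the same unordered endpoint pair $\{x,z\}$ but having distinct centers and disjoint hyperedge pairs (take the two opposite vertices of the $4$-cycle as the two centers). Hence $C_4$-freeness forces that, away from degenerate cherries, each pair $\{x,z\}$ is the endpoint pair of at most one cherry, exactly as in the graph case, so the number of non-degenerate cherries is at most $\binom{n}{2}$. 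Counting cherries instead from the center relates their number to $\sum_v\binom{d(v)}{2}$, where $d(v)=|\{h:v\in h\}|$ and $\sum_v d(v)=\sum_h|h|$; by convexity this is bounded below in terms of $\sum_h|h|$, and hence, after subtracting the $3|\mathcal{H}|$ coming from the $-3$, in terms of $\sum_h(|h|-3)$. Comparing the two counts and solving the resulting quadratic inequality is what produces a bound of the shape $c\,n^{3/2}+O(n)$.

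The main obstacle is the bookkeeping for the degenerate cherries: those in which the endpoint pair, or an endpoint--center pair, lies in two or more hyperedges (high codegree), and those in which two cherries reuse a hyperedge. These configurations are genuinely unavoidable, since the example in which arbitrarily many hyperedges all contain a fixed pair $\{x,y\}$ is $C_4$-free; it is precisely to neutralize them that the summand $|h|-3$ (rather than $|h|$) and the additive $O(n)$ error are present. I would show that the total contribution of all high-codegree pairs to $\sum_h(|h|-3)$ is $O(n)$, so they may be discarded and the clean ``one cherry per pair'' estimate applied to the rest. Tracking the constants through this split and through the final convexity optimization --- where subtracting $3|\mathcal{H}|$ is what sharpens the leading coefficient --- is what I expect to deliver exactly $\frac{\sqrt6}{2}$; producing this precise constant, rather than merely $O(n^{3/2})$, is the delicate point.

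For the lower bound it suffices to exhibit one $C_4$-free hypergraph with $\sum_h(|h|-3)=\Omega(n^{3/2})$ and then optimize its parameters. I would take a $k$-uniform \emph{linear} hypergraph (any two vertices in at most one hyperedge) of Berge-girth at least five, so that it contains neither a Berge-$C_3$ nor a Berge-$C_4$, and having $\Theta(n^{3/2})$ edges; such hypergraphs exist for $k=3$ (Lazebnik--Verstra\"ete) and the construction generalizes to larger fixed $k$. Linearity kills the book degeneracies, and in a linear girth-five hypergraph each pair $\{x,z\}$ supports only $O(k)$ cherries, which is exactly what lets the edge count reach order $n^{3/2}$. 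Since each of the $m=\Theta(n^{3/2}/\mathrm{poly}(k))$ hyperedges contributes $k-3$, the total is $m(k-3)$, and optimizing the uniformity $k$ --- balancing the gain $k-3$ per edge against the loss in the number of edges --- is what yields the constant $\frac{1}{3\sqrt3}$. The obstacle here is purely constructive: verifying Berge-girth five for the chosen family and computing both the edge count and the optimal $k$ precisely.
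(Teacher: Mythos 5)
Both halves of your proposal contain genuine gaps, and they are different in kind.

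\textbf{Upper bound.} Your degeneracy bookkeeping fails as stated. You propose to discard the ``high-codegree'' pairs at a total cost of $O(n)$ to $\sum_h(|h|-3)$ and then apply the clean ``one cherry per endpoint pair'' bound to the rest. Test this against the paper's own extremal example: blow up each vertex of the Lazebnik--Verstra\"ete girth-five $3$-uniform hypergraph into $3$ clones, making every hyperedge $9$-uniform. There, \emph{every} hyperedge contains cloned pairs of codegree $\Theta(\sqrt n)$, so removing the hyperedges that meet a high-codegree pair removes a part with $\sum_h(|h|-3)=\Theta(n^{3/2})$, not $O(n)$. Pruning cherries instead of hyperedges does not rescue the count: a pair $\{a_1,b_1\}$ of clones of adjacent original vertices has codegree $1$, yet supports $\Theta(\sqrt n)$ cherries (center a clone $b_2$ or $b_3$, one hyperedge the common blown-up edge, the other any blown-up edge through $b$, and symmetrically in $a$); no Berge-$C_4$ arises because all these cherries share the blown-up edge. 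Summed over the $\Theta(n^{3/2})$ such pairs this gives $\Theta(n^2)$ problematic cherries --- comparable to, indeed exceeding, $\binom n2$ --- so they are not a lower-order error that can be absorbed; handling them is the whole difficulty (this is essentially why the Gy\H ori--Lemons counting proof is long and loses a large constant). The paper sidesteps cherry counting entirely: it greedily embeds exactly $|h|-3$ fresh edges into each hyperedge (vertex-disjoint triangles plus at most three independent edges), uses small Ramsey-type facts (red $K_4^-$ versus blue triangles/matchings, Lemma~\ref{ramsey-lemma}) to show the greedy step never gets stuck, proves the resulting graph is $K_{2,7}$-free, and applies Theorem~\ref{KST}; the constant $\frac{\sqrt6}{2}$ is exactly the $K_{2,7}$ constant, and nothing in your scheme points to it.

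\textbf{Lower bound.} Here the gap is arithmetic and fatal: insisting on linearity caps the achievable constant strictly below $\frac{1}{3\sqrt3}$. In a $k$-uniform \emph{linear} hypergraph of Berge-girth at least $5$, any two cherries with the same endpoint pair force a Berge-$C_2$, $C_3$ or $C_4$ (using linearity), so each pair of vertices is the endpoint pair of at most one cherry, while each pair of hyperedges through a common vertex spawns $(k-1)^2$ cherries with distinct endpoint pairs. Convexity then yields $m\le(1+o(1))\,n^{3/2}/(k(k-1))$, hence
$$\sum_{h}(|h|-3)=(k-3)m\le(1+o(1))\,\frac{k-3}{k(k-1)}\,n^{3/2}\le(1+o(1))\,\frac{1}{10}\,n^{3/2},$$
since $\frac{k-3}{k(k-1)}$ is maximized at $k=5,6$ where it equals $\frac1{10}$. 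As $\frac{1}{3\sqrt3}\approx 0.19>\frac1{10}$, no linear uniform construction, however you optimize $k$, can prove the stated bound. The paper's construction is deliberately \emph{non}-linear: it blows up each vertex of the Lazebnik--Verstra\"ete hypergraph into three clones, obtaining a $9$-uniform $C_4$-free hypergraph in which cloned pairs have codegree $\Theta(\sqrt n)$, with $\frac16(n/3)^{3/2}+o(n^{3/2})$ hyperedges each contributing $9-3=6$, for a total of $(n/3)^{3/2}=\frac{1}{3\sqrt3}n^{3/2}$. The extra codegree is not a defect to be avoided but the very mechanism that lifts the constant above what linear hypergraphs permit.
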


This improves the results of Gy\H{o}ri and Lemons \cite{GyLe3} who showed that the leading term of the sum above is between $\frac{1}{8}n^{3/2}$ and $12\sqrt{2}n^{3/2}$.

If a hypergraph $\mathcal{H}$ is $C_i$-free for $i=2,3,\dots, g-1$, then we say that $\mathcal{H}$ has \emph{girth} $g$ (in the Berge sense). Note that the property of being $C_2$-free implies that any two hyperedges intersect in at most one vertex.
Lazebnik and Verstra\"ete \cite{LaVe} examined hypergraphs of girth $5$ where all edges are of size $r$. Of particular interest is their result when $r=3$,

\begin{theorem}[Lazebnik, Verstra\"ete \cite{LaVe}]\label{LV-girth}
If $\mathcal{H}$ is a hypergraph on $n$ vertices with girth $5$ and every hyperedge in $\mathcal{H}$ has size $3$ and the maximum number of edges, then
$$|\mathcal{H}| = \frac{1}{6}n^{3/2} + o(n^{3/2}).$$
\end{theorem}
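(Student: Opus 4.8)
The plan is to pass from the $3$-uniform hypergraph $\mathcal H$ to its $2$-shadow graph $\partial\mathcal H$, the graph on $V(\mathcal H)$ in which $uv$ is an edge whenever some hyperedge of $\mathcal H$ contains $\{u,v\}$. Since girth $5$ includes being $C_2$-free, any two hyperedges meet in at most one vertex, so each pair of vertices lies in at most one hyperedge; consequently the three pairs arising from distinct triples are all distinct and $|E(\partial\mathcal H)| = 3|\mathcal H|$. The whole upper bound then rests on one claim: \emph{if $\mathcal H$ has girth $5$ then $\partial\mathcal H$ is $C_4$-free.} Granting this, I would finish with the standard bound for $C_4$-free graphs: counting cherries (paths $u$-$v$-$w$), a $C_4$-free graph satisfies $\sum_v \binom{d(v)}{2}\le \binom n2$ because each pair of endpoints has at most one common neighbour, and convexity gives $|E(\partial\mathcal H)|\le \tfrac12 n^{3/2}(1+o(1))$ — this is the sharp form of Theorem~\ref{KST} for $s=t=2$. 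Dividing by $3$ yields $|\mathcal H|\le \tfrac16 n^{3/2}+o(n^{3/2})$, the desired leading term.

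To prove the claim I would argue by contradiction from a $4$-cycle $a,b,c,d$ in $\partial\mathcal H$, tracking which hyperedges realise its four edges $ab,bc,cd,da$, say $h_1,h_2,h_3,h_4$ respectively. Because a hyperedge has only three vertices, two of the four edges can be realised by a common hyperedge only if they are consecutive (opposite edges are disjoint pairs and would need four vertices). If $h_1,h_2,h_3,h_4$ are all distinct, then $a,b,c,d$ together with these hyperedges form a Berge $C_4$, contradicting girth $5$. Otherwise, after relabelling, $ab$ and $bc$ lie in a common hyperedge $h=\{a,b,c\}$; then $h$ also contains the pair $\{a,c\}$, so $a,c,d$ together with $h$ (for $ac$), $h_3$ (for $cd$) and $h_4$ (for $da$) form a Berge $C_3$, once I check these three hyperedges are distinct. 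Here linearity does the work: $h$ omits $d$, so $h\ne h_3,h_4$, while $h_3=h_4$ would force a single hyperedge to contain $a,c,d$ and hence to share the two vertices $a,c$ with $h$, violating the $C_2$-free property. Thus a Berge $C_3$ appears, again contradicting girth $5$, which establishes the claim. I expect this case analysis to be entirely routine.

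The genuine difficulty is the matching lower bound, i.e.\ a construction achieving $\tfrac16 n^{3/2}(1-o(1))$. I would first reduce it to a purely graph-theoretic target that mirrors the upper-bound argument: it suffices to produce a $C_4$-free graph $G$ on $n$ vertices with $(\tfrac12-o(1))n^{3/2}$ edges that decomposes into edge-disjoint triangles which are, moreover, the \emph{only} triangles of $G$. Taking these triangles as hyperedges gives a $3$-uniform hypergraph $\mathcal H$ that is automatically linear (edge-disjointness), has no Berge $C_4$ (since $G$ is $C_4$-free there is no $4$-cycle to support one), and has no Berge $C_3$ (a Berge $C_3$ would require a triangle of $G$ whose three edges come from three different hyperedges, impossible when the only triangles are the decomposition triangles); hence $\mathcal H$ has girth $5$ and $|\mathcal H|=\tfrac13|E(G)|=(\tfrac16-o(1))n^{3/2}$. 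The hard part is exhibiting such a graph with the near-extremal number of edges, which I would seek among highly symmetric incidence structures — a generalized quadrangle or a polarity-type construction over a finite field — where the edge count is controlled and the triangle structure can be pinned down exactly. Matching the constant $\tfrac16$ forces the $C_4$-free graph to be essentially extremal, so the construction must be chosen with care; this is where I expect the main effort to lie.
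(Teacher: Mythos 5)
Your upper bound is correct, and it is essentially the paper's own argument (the final Proposition of the paper, which is exactly its advertised ``new short proof'' of this upper bound): replace each hyperedge by its three shadow edges, use girth $5$ and linearity to show the shadow graph is $C_4$-free, and apply $\ex(n,C_4)=\tfrac12 n^{3/2}+o(n^{3/2})$. Your case analysis (four distinct hyperedges give a Berge-$C_4$; otherwise a coincidence can only occur between consecutive edges, and then linearity yields a Berge-$C_3$ or a Berge-$C_2$) is the same shortening argument the paper uses, written out in more detail, and it is complete.

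The genuine gap is the lower bound, which the stated theorem requires since it asserts an asymptotic equality for hypergraphs with the maximum number of edges. You correctly identify this as the hard part, reduce it to exhibiting a $C_4$-free graph on $n$ vertices with $(\tfrac12-o(1))n^{3/2}$ edges decomposing into edge-disjoint triangles that are its only triangles, and then defer to ``a generalized quadrangle or a polarity-type construction.'' No construction is actually produced, and the reduction by itself buys nothing: in a $C_4$-free graph every edge lies in at most one triangle (two triangles sharing an edge create a $C_4$), so your target graph is precisely the shadow of an extremal girth-$5$ hypergraph, and its existence is equivalent to the statement being proved. (In fact your requirements are stronger than needed: taking \emph{all} triangles of \emph{any} $C_4$-free graph as hyperedges automatically yields a hypergraph of Berge-girth $5$, so it would suffice to find a $C_4$-free graph with $(\tfrac16-o(1))n^{3/2}$ triangles --- e.g.\ a careful analysis of the Erd\H{o}s--R\'enyi polarity graph, where the two polar lines of adjacent vertices typically meet in exactly one common neighbour, would do --- but this verification and counting is exactly the work you have not done.) For comparison, the paper does not reprove this half either: the theorem is cited from Lazebnik and Verstra\"ete, and the paper's own contribution is only the upper-bound proof that your first two paragraphs reproduce. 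As a self-contained proof of the stated theorem, your proposal is missing its lower-bound half.
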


The construction used in the lower bound of Theorem~\ref{C4-free} will be based on the lower bound from Theorem~\ref{LV-girth}.
Furthermore, in the last section we find a new (short) proof of the upper bound in Theorem~\ref{LV-girth}.



\section{$F$-free hypergraphs}\label{2}

\begin{lemma}\label{anygraph} 
Let $F$ be a graph and suppose $\mathcal{H}$ is an $F$-free hypergraph. If every edge of $\mathcal{H}$ has size at least $|V(F)|$,
then $|\mathcal{H}| \le \ex(n,F)$.
\end{lemma}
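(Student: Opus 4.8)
The plan is to extract from $\mathcal{H}$ an ordinary $F$-free graph $G$ on the same vertex set that carries exactly one edge per hyperedge, so that $|\mathcal{H}| = |E(G)| \le \ex(n,F)$ follows at once from the definition of the Tur\'an number. Concretely, I would list the hyperedges in an arbitrary order $h_1,h_2,\dots,h_m$ (with $m=|\mathcal{H}|$) and build graphs $G_0 \subseteq G_1 \subseteq \cdots \subseteq G_m$, maintaining the invariant that $G_i$ is $F$-free and has exactly $i$ distinct edges, where $G_i$ is obtained from $G_{i-1}$ by adding a single pair $\{x,y\}\subseteq V(h_i)$ that serves as the \emph{representative} of $h_i$. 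If this assignment can always be extended, then $G:=G_m$ is an $F$-free graph with $|\mathcal{H}|$ distinct edges and the lemma is immediate.

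The crux is the extension step: given the $F$-free graph $G_{i-1}$, I must produce a pair $\{x,y\}$ inside $W:=V(h_i)$ that is not already an edge of $G_{i-1}$ and whose addition keeps the graph $F$-free. I would show that the $F$-freeness of $\mathcal{H}$ eliminates the two possible obstructions. First, if \emph{every} pair inside $W$ were already an edge, then $G_{i-1}[W]$ would be a clique on $|W|\ge |V(F)|$ vertices, hence would contain $K_{|V(F)|}\supseteq F$, contradicting that $G_{i-1}$ is $F$-free; so some non-edge pair of $W$ exists. Second, suppose every such non-edge pair created a copy of $F$ upon insertion. Choosing any one of them, say $\{x,y\}$, the new copy of $F$ must use the edge $\{x,y\}$ (otherwise it would lie entirely in the already $F$-free graph $G_{i-1}$) together with some edges of $G_{i-1}$. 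Mapping each edge of this copy of $F$ to the hyperedge it represents — the old edges to their respective hyperedges among $h_1,\dots,h_{i-1}$, and $\{x,y\}$ to $h_i$ — yields a bijection from $E(F)$ onto a set of distinct hyperedges, each containing its image pair. That is precisely a Berge-$F$ inside $\mathcal{H}$, a contradiction. Hence an admissible representative always exists and the construction never stalls.

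The place I expect the actual work to lie is the bookkeeping in this second obstruction: I must verify that distinct edges of $G$ really do correspond to distinct hyperedges (which holds because each hyperedge contributes exactly one edge to $G$) and that $h_i$ is distinct from every previously processed hyperedge — so that the map read off from the embedded copy of $F$ is a genuine bijection onto distinct edges of $\mathcal{H}$, as the definition of Berge-$F$ demands. This is also exactly where the hypothesis that every hyperedge has size at least $|V(F)|$ is used: it guarantees, via the first obstruction, that $W$ can never be saturated without already forcing $F$ into $G_{i-1}$, giving room for a fresh representative at every step.
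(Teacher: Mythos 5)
Your proposal is correct and follows essentially the same route as the paper: a greedy embedding of one fresh representative edge per hyperedge, with the clique obstruction handled by the hypothesis $|h|\ge |V(F)|$ and the key observation that any copy of $F$ built from representative edges of distinct hyperedges lifts to a Berge-$F$ in $\mathcal{H}$. The only cosmetic difference is that you maintain $F$-freeness of the auxiliary graph as an invariant during the construction (your second obstruction), whereas the paper imposes only edge-distinctness and deduces $F$-freeness at the end from the same Berge correspondence.
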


\begin{proof}
Let us construct a graph $H$ on the ground set of $\mathcal{H}$ by embedding a (unique) edge into each hyperedge of $\mathcal{H}$. By definition if $\mathcal{H}$ is $F$-free, then $H$ must be $F$-free and thus
$|\mathcal{H}|=|H|\le \ex(n,F)$.

We can construct $H$ greedily. Take the hyperedges of $\mathcal{H}$ in arbitrary order and for each hyperedge embed an edge that has not already been used in $H$.
If at some step we cannot find such an edge, then $H$ contains a complete graph of size equal to the size of the hyperedge. Obviously, such a complete graph contains a copy of $F$; a contradiction.
\end{proof}

The following useful anti-Ramsey lemma appears in Babai \cite{Ba} (a generalization appears in Erd\H os, Ne\v set\v ril, and R\" odl \cite{ErNeRo}). We include the very short proof for the sake of completeness.

\begin{lemma}\label{rainbow} 
A properly edge-colored complete graph on at least $r^3$ vertices contains a rainbow $K_r$.
\end{lemma}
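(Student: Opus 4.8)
The plan is to prove this by a probabilistic/counting argument on the color classes, which is the standard route for such anti-Ramsey statements. Let me set up the framework: we have a properly edge-colored complete graph $K_n$ with $n \ge r^3$, and a proper edge-coloring means that any two edges sharing a vertex receive distinct colors. The goal is to locate a set of $r$ vertices such that the $\binom{r}{2}$ edges among them are pairwise distinctly colored (a rainbow $K_r$).

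First I would build the rainbow clique greedily, one vertex at a time, maintaining the invariant that the current vertex set spans a rainbow clique. Suppose I have already chosen vertices $v_1, \dots, v_i$ forming a rainbow $K_i$ with $i < r$. I want to add a new vertex $v_{i+1}$ so that the edges from $v_{i+1}$ to each of $v_1, \dots, v_i$ all carry colors that are (a) distinct from one another and (b) distinct from the colors already used inside the clique. Condition (a) is automatic: since the coloring is proper, the $i$ edges emanating from any single vertex $v_{i+1}$ already have distinct colors. So the real task is to forbid the $\binom{i}{2}$ colors already present in the clique from appearing on any of the $i$ edges joining $v_{i+1}$ to the existing vertices.

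The key counting step is to bound how many candidate vertices get ``killed'' by each forbidden color. Fix a color $c$ already used in the clique. Because the coloring is proper, the edges of color $c$ form a matching, so color $c$ can appear on at most one edge incident to each $v_j$; summing over $j \le i$, color $c$ disqualifies at most $i$ candidate vertices. There are $\binom{i}{2}$ forbidden colors, so the number of bad candidates is at most $i \binom{i}{2} < i^3 \le r^3$ (using $i < r$, and being slightly careful with the arithmetic to match the stated threshold). Hence, as long as $n \ge r^3$ there remains at least one vertex that can legally extend the clique, and we continue until $i = r$.

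The main obstacle — really the only delicate point — is getting the numerical bound to land exactly at $r^3$ rather than something slightly larger; I expect the honest count of disqualified vertices to be roughly $i \cdot \binom{i}{2} \approx i^3/2$, which is comfortably below $r^3$, so the threshold $n \ge r^3$ should leave ample room and the induction goes through at every step $i < r$. I would double-check the base case ($i = 1$ or $i = 2$, where no colors are yet forbidden) and confirm that the strict inequality $i < r$ is what makes the final step work. No appeal to earlier results in the paper is needed; this is self-contained.
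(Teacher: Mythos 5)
Your proof is correct and is essentially the paper's own argument: the paper takes a \emph{maximal} rainbow clique $K$ and observes that every vertex outside $K$ must be killed by one of the $\binom{|K|}{2}$ clique colors, each of which (since color classes are matchings under a proper coloring) kills at most $|K|$ vertices --- exactly your counting step, phrased via maximality rather than greedy extension. The arithmetic slack you flagged is indeed ample: the honest count $i\binom{i}{2}+i < \tfrac{1}{2}r^3 + r < r^3$ for all $i<r$ and $r\ge 2$, so the induction closes.
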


\begin{proof} 
Suppose we have a proper edge-coloring of the complete graph on $r^3$ vertices and let $K$ be the largest rainbow sub-complete graph. Then every vertex $x$ not in $K$ is adjacent to some $y \in K$ by an edge with the same color as one of the $\binom{|K|}{2}$ colors appearing in $K$. Thus we have
$$r^3-|K| \leq |K| \binom{|K|}{2} < \frac{1}{2}|K|^3-|K|.$$
Therefore
$$|K|> r.$$
\end{proof}

\begin{lemma}\label{edge-sum}
Let $F$ be a graph on $r$ vertices. If $\mathcal{H}$ is an $F$-free hypergraph on $n$ vertices such that each hyperedge is of size at least $r^3$, then
$$\sum_{h \in \mathcal{H}} |h| \le 2\binom{n}{2} + r^3|\mathcal{H}|.$$
\end{lemma}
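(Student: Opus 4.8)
The plan is to greedily build an ordinary (simple) graph $G$ on the vertex set $V(\mathcal{H})$ whose edges are properly colored by the hyperedges of $\mathcal{H}$, where an edge $\{x,y\}$ may receive the color $h$ only if $\{x,y\} \subseteq h$. Since a proper edge-coloring is exactly a coloring in which every color class is a matching, I would produce $G$ by processing the hyperedges one at a time and, for each hyperedge $h$, adding a matching $M_h$ of new edges lying inside $h$, all colored $h$. The colors being fresh at each step, the union stays properly colored automatically. The two facts I want from this construction are: (i) the total number of edges is at most $\binom{n}{2}$ because $G$ is simple, and (ii) each $M_h$ is large, namely $|M_h| \ge (|h| - r^3)/2$.

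The bridge to $F$-freeness is the observation that $G$ can never contain a clique on $r^3$ vertices. Indeed, any such $K_{r^3} \subseteq G$ inherits the proper edge-coloring of $G$, so Lemma~\ref{rainbow} produces a rainbow $K_r$ inside it; the $\binom{r}{2}$ edges of this $K_r$ carry $\binom{r}{2}$ distinct hyperedge-colors, and since each colored edge is contained in its hyperedge, restricting to the edges of $F \subseteq K_r$ exhibits a Berge-$F$ in $\mathcal{H}$, contradicting $F$-freeness. Hence $G$ stays $K_{r^3}$-free throughout the construction.

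To secure the lower bound (ii) on $|M_h|$, I would take $M_h$ to be a maximum matching in the graph $A_h$ of \emph{available} pairs inside $h$ --- those pairs $\{x,y\} \subseteq h$ that are not already edges of $G$. If $U_h$ denotes the set of vertices of $h$ left uncovered by $M_h$, then maximality forces $U_h$ to be independent in $A_h$, meaning every pair inside $U_h$ is already an edge of $G$; that is, $U_h$ spans a clique of $G$. By the previous paragraph $|U_h| \le r^3 - 1$, so $M_h$ covers at least $|h| - r^3 + 1$ vertices of $h$ and therefore $|M_h| \ge (|h| - r^3)/2$.

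Combining the pieces, $\binom{n}{2} \ge |E(G)| = \sum_{h \in \mathcal{H}} |M_h| \ge \tfrac{1}{2}\sum_{h \in \mathcal{H}}(|h| - r^3)$, which rearranges to the claimed inequality $\sum_{h \in \mathcal{H}} |h| \le 2\binom{n}{2} + r^3|\mathcal{H}|$. I expect the step requiring the most care to be the maximum-matching argument: one must verify that the uncovered set really does induce a clique in the current graph $G$ (so that the $K_{r^3}$-free property applies) and confirm that adding $M_h$ preserves both simplicity and the proper coloring, so that the contradiction via Lemma~\ref{rainbow} remains available at every stage of the process.
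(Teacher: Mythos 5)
Your proof is correct and follows essentially the same route as the paper's: greedily embed pairwise disjoint matchings (one per hyperedge), view them as a proper edge-coloring, and use Lemma~\ref{rainbow} to turn any clique of already-used pairs on $r^3$ vertices into a rainbow $K_r$ and hence a Berge-$F$. Your maximum-matching/uncovered-clique argument is just a more explicit write-up of the step the paper states tersely ("there is a set $K$ of more than $r^3$ vertices such that every edge in $K$ has already been used").
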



\begin{proof}
Similar to the proof of Lemma~\ref{anygraph} we will embed a matching of size at least
$$\frac{1}{2}\left(|h|-r^3\right)$$ 
into each hyperedge $h$ in $\mathcal{H}$ such that the collection of matchings are pairwise disjoint (i.e.\ no edge is in more than one matching).

Take the hyperedges of $\mathcal{H}$ in arbitrary order and suppose
we have reached a hyperedge $h$ where we cannot find a large enough matching to embed, i.e., we can only embed a matching of size less than $\frac{1}{2}(|h|-r^3)$ .
Then there is a set $K$ of more than $r^3$ vertices such that 
every edge in $K$ has already been used by a matching from previous hyperedges.
If we color each of these matchings by a unique color corresponding to their hyperedge, then we have a proper edge-coloring of the complete graph on $K$.
As $K$ has at least $\frac{1}{2}r^3$ vertices, by Lemma~\ref{rainbow} we have that $K$ contains a rainbow $K_r$ which obviously contains a rainbow $F$.
However, this rainbow $F$ would correspond to a Berge-$F$ in $\mathcal{H}$; a contradiction.

Therefore, we can embed a matching of size $\frac{1}{2}(|h|-r^3)$ into each hyperedge of $\mathcal{H}$. The hypergraph $\mathcal{H}$ is on $n$ vertices, so there
are at most $\binom{n}{2}$ total edges embedded into the hyperedges of $\mathcal{H}$. Thus
$\sum_{h \in \mathcal{H}} \frac{1}{2}\left(|h|-r^3\right) \leq \binom{n}{2}$.
Rearranging terms gives the lemma.
\end{proof}



\begin{proof}[Proof of Theorem \ref{general}]
First we split $\mathcal{H}$ into two parts. Let $\mathcal{H}_1$ 
be the collection of hyperedges each of size less than
 $r^3$ and let $\mathcal{H}_2 = \mathcal{H} - \mathcal{H}_1$ be the remaining edges.

All hyperedges in $\mathcal{H}$ are of size at least $r$, so by Lemma~\ref{anygraph} we have $|\mathcal{H}_1| \leq \ex(n,F)$ and
$|\mathcal{H}_2| \leq \ex(n,F) \leq n^2$.

Therefore,
\begin{align*}
\sum_{h \in \mathcal{H}_1} |h| & \le r^3|\mathcal{H}_1| = O(n^2)
\end{align*}
On the other hand, by Lemma~\ref{edge-sum}
\begin{align*}
\sum_{h \in \mathcal{H}_2} & |h| \leq 2\binom{n}{2} + r^3|\mathcal{H}_2| = O(n^2). 
\end{align*}

Combining the two estimates above gives,
$$\sum_{v \in V(\mathcal{H})}d(v)=\sum_{h \in \mathcal{H}} |h| = \sum_{h \in \mathcal{H}_1} |h| + \sum_{h \in \mathcal{H}_2} |h| = O(n^2).$$ 

To complete the proof of Theorem~\ref{general} we need to construct a $K_{r}$-free hypergraph with $\Omega(n^2)$ edges of size $r$.

Suppose $G$ is an $n$-vertex $K_{r}$-free graph with $\Omega(n^2)$ edges (e.g.\ a Tur\'an graph). Let $G'$ be a bipartite subgraph of $G$ with at least half of the total edges and let $A'$ and $B'$ be the partition classes of $G'$. 
Let us replace each vertex $a_i \in A'$ with a set $A_i$ of $r-1$ copies of $a_i$ to get an $r$-uniform hypergraph $\mathcal{H}$. Put $A = \cup_i A_i$ and $B = B'$. 
Observe that no hyperedge of $\mathcal{H}$ contains two vertices from $B$ nor does it contain vertices from two distinct $A_i$s. Therefore, the vertices of a $K_r$ in $\mathcal{H}$ can only be those of some $A_i$ and a single vertex $b$ from $B$. However, for each vertex in $B$ there is only one hyperedge of $\mathcal{H}$ containing it and $A_i$, so these vertices cannot be part of a Berge-$K_r$ in $\mathcal{H}$.

Therefore $\mathcal{H}$ is a $K_r$-free $r$-uniform hypergraph on at most $(r-1)n$ vertices and $\Omega(n^2)$ edges, i.e., 
$$\sum_{v \in V(\mathcal{H})} d(v) = r|\mathcal{H}| = \Omega(n^2) = \Omega(|V(\mathcal{H})|^2).$$
\end{proof}

Before proving Theorem~\ref{bipar} we need an anti-Ramsey lemma similar to Lemma~\ref{rainbow} but for complete bipartite graphs. This lemma was also
proved by Cho, Defant, Sonneborn \cite{ChDeSo}.

\begin{lemma}\label{rainbowbipar} 
A properly edge-colored $K_{s,s(s-1)(t-1)+t}$ contains a rainbow $K_{s,t}$.
\end{lemma}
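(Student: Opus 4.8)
The plan is to run a greedy vertex-selection argument on the large side of the bipartite graph. Write $A$ for the part of size $s$ and $B$ for the part of size $N := s(s-1)(t-1)+t$, label $A = \{a_1, \dots, a_s\}$, and let $c$ denote the given proper edge-coloring. Any copy of $K_{s,t}$ that uses all of $A$ is determined by a choice of $t$ vertices $T \subseteq B$, and such a copy is rainbow precisely when the $st$ edges between $A$ and $T$ receive $st$ distinct colors. So the goal is to produce a suitable set $T$ with $|T| = t$.

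The first step is to pin down which color coincidences can actually spoil rainbowness. Since the coloring is proper, any two edges sharing an endpoint already have distinct colors. Among the edges joining $A$ to $T$, two edges $a_k b$ and $a_l b'$ share an endpoint unless both $k \ne l$ and $b \ne b'$. Hence $A \cup T$ fails to be rainbow only through a \emph{crossing} coincidence $c(a_k b) = c(a_l b')$ with $k \ne l$ and $b, b' \in T$ distinct; these are the only collisions I need to avoid.

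Next I would build $T = \{b_1, \dots, b_t\}$ one vertex at a time, never introducing a crossing coincidence. Suppose $b_1, \dots, b_m$ have already been chosen with no crossing coincidence among them. Call a not-yet-chosen vertex $b$ \emph{forbidden} if $c(a_k b) = c(a_l b_i)$ for some $k \ne l$ and some $i \le m$. To bound the number of forbidden vertices, fix one of the $sm$ edges $a_l b_i$ already present together with one of the $s-1$ indices $k \ne l$; by properness at $a_k$ there is at most one vertex $b$ with $c(a_k b) = c(a_l b_i)$. Summing over all such pairs shows that at most $s(s-1)m$ vertices are forbidden.

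Consequently, at step $m+1$ the number of unavailable vertices — the $m$ already chosen plus the forbidden ones — is at most $m + s(s-1)m = m(s^2-s+1)$. For every $m \le t-1$ this is at most $(t-1)(s^2-s+1) = s(s-1)(t-1) + (t-1) = N-1 < |B|$, so a valid choice for $b_{m+1}$ always remains, and iterating until $|T| = t$ yields the desired rainbow $K_{s,t}$. I expect the point needing the most care to be the counting in the third step: one must count only genuine crossing coincidences rather than those already ruled out by properness, and verify that the factor $s(s-1)$ (not something larger) suffices — it is exactly this that makes the threshold match the stated value $N = s(s-1)(t-1)+t$.
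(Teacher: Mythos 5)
Your proof is correct and is essentially the paper's argument: the paper takes a maximum rainbow $K_{s,t'}$ using all of $S$ and counts the vertices ``blocked'' by color coincidences, which is exactly your greedy step run to completion, with the identical count that each of the $s(s-1)\cdot(\text{current size})$ pairs (existing edge, non-incident vertex of the small side) forbids at most one new vertex by properness, and the same threshold arithmetic $(t-1)\left(s(s-1)+1\right) = s(s-1)(t-1)+t-1 < |B|$.
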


\begin{proof}
Suppose $K_{s,s(s-1)(t-1)+t}$ is properly edge-colored and let $S$ be the class of size $s$ and $T$ be the other class.
Consider the largest rainbow $K_{s,t'}$ that uses all $s$ vertices of $S$. That is, there are $t'$ vertices in $T$ such that all edges between them and $S$ are of distinct colors. It is enough to show that $t' \geq t$.

Obviously $t' \ge 1$ as the edge-coloring is proper. For every vertex $x$ not in $K_{s,t'}$ there is a neighbor of $x$ in $S$ such that the color of the edge between them is one of the colors used in $K_{s,t'}$ (otherwise we can add $x$ to $K_{s,t'}$).
Each vertex $y\ in S$ is adjacent to $t'$ vertices in $K_{s,t'}$, so there are $st'-t' = (s-1)t'$ colors
that appear in $K_{s,t'}$ not on edges incident to $y$. So at most $y$ can force $(s-1)t'$ many vertices in $T$ to not be in $K_{s,t'}$.
Therefore, in total, there are at most $s(s-1)t'$ vertices in $T$ that are not in $K_{s,t'}$.
So $T$ has at most $s(s-1)t' + t'$ vertices, hence $t'\ge t$.
\end{proof}

\begin{proof}[Proof of Theorem \ref{bipar}] 

First we partition the hyperedges of $\mathcal{H}$ into two parts. Let $\mathcal{H}_1$ be the collection
of hyperedges each of size at least $s+s(s-1)(t-1)+t$ and let $\mathcal{H}_2 = \mathcal{H} - \mathcal{H}_1$ be the remaining hyperedges.

\begin{claim}
$$\sum_{h \in \mathcal{H}_1} |h|= O(n^{2-1/s}).$$
\end{claim}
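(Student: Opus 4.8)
The plan is to mimic the matching-embedding argument of Lemma~\ref{edge-sum}, but to replace the complete-graph anti-Ramsey input (Lemma~\ref{rainbow}) by the bipartite version (Lemma~\ref{rainbowbipar}), and then to exploit the fact that the resulting auxiliary graph is $K_{s,m}$-free in the ordinary graph sense, so that the K\H ov\'ari--S\'os--Tur\'an bound (Theorem~\ref{KST}) applies. Write $m = s(s-1)(t-1)+t$, so that every hyperedge in $\mathcal{H}_1$ has size at least $s+m$.

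First I would build an auxiliary graph $H$ on $V(\mathcal{H})$ by processing the hyperedges of $\mathcal{H}_1$ in arbitrary order and greedily embedding into each $h$ a maximal matching $M_h$ whose pairs lie inside $h$ and are not already edges of $H$. Color each embedded edge by its hyperedge $h$; since $M_h$ is a matching and distinct hyperedges receive distinct colors, this is a proper edge-coloring of $H$, and the embedded edges are pairwise distinct, so $\sum_{h \in \mathcal{H}_1}|M_h| = |E(H)|$.

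The key step is to show that each matching is large, namely $|M_h| \ge \frac{1}{2}(|h|-s-m)$. If $M_h$ covers $2|M_h|$ vertices of $h$, then by maximality every pair among the remaining $|h|-2|M_h|$ uncovered vertices is already an edge of $H$, so these vertices span a properly edge-colored complete graph. Were this complete graph to have at least $s+m$ vertices, it would contain a properly edge-colored $K_{s,m}$, which by Lemma~\ref{rainbowbipar} contains a rainbow $K_{s,t}$; its $st$ distinctly colored edges correspond to $st$ distinct hyperedges each containing the relevant pair, i.e.\ a Berge-$K_{s,t}$ in $\mathcal{H}$, a contradiction. Hence fewer than $s+m$ vertices remain uncovered, which gives the claimed lower bound on $|M_h|$. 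The same rainbow-to-Berge observation shows more generally that $H$ is $K_{s,m}$-free as a graph: any $K_{s,m}\subseteq H$ inherits a proper coloring and hence a rainbow $K_{s,t}$.

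Finally I would assemble the bound. Since $H$ is $K_{s,m}$-free with $s \le m$, Theorem~\ref{KST} gives $|E(H)| = O(n^{2-1/s})$. Combining this with the matching lower bound,
$$\frac{1}{2}\sum_{h \in \mathcal{H}_1}\bigl(|h|-s-m\bigr) \le \sum_{h \in \mathcal{H}_1}|M_h| = |E(H)| = O(n^{2-1/s}),$$
so $\sum_{h \in \mathcal{H}_1}|h| \le O(n^{2-1/s}) + (s+m)\,|\mathcal{H}_1|$. It remains to bound $|\mathcal{H}_1|$: every hyperedge has size at least $s+t = |V(K_{s,t})|$, so Lemma~\ref{anygraph} yields $|\mathcal{H}_1| \le |\mathcal{H}| \le \ex(n,K_{s,t}) = O(n^{2-1/s})$, and the claim follows. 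I expect the main obstacle to be the middle step, verifying that the greedy matching stays large; this is exactly where the bipartite anti-Ramsey lemma and the translation of a rainbow $K_{s,t}$ into a Berge-$K_{s,t}$ do the real work, while the rest is bookkeeping.
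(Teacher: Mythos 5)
Your proposal is correct and follows essentially the same route as the paper: greedily embed pairwise-disjoint matchings into the hyperedges of $\mathcal{H}_1$, use the bipartite anti-Ramsey lemma (Lemma~\ref{rainbowbipar}) to show each matching covers all but at most $s+s(s-1)(t-1)+t$ vertices and that the auxiliary graph is $K_{s,s(s-1)(t-1)+t}$-free, then finish with Theorem~\ref{KST} and Lemma~\ref{anygraph}. Your maximal-matching phrasing of the key step is a slightly cleaner way of stating the paper's ``if the matching is too small, the leftover vertices span a properly colored complete graph'' argument, but the substance is identical.
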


%

\begin{proof}\renewcommand{\qedsymbol}{$\blacksquare$}
We would like to construct a simple graph $G$ by embedding into each hyperedge $h$ of $\mathcal{H}_1$ a matching of size 
$$\frac{1}{2}\left(|h| - s - s(s-1)(t-1)-t\right)$$
 such that the collection of these matchings are pairwise disjoint (i.e.\ no edge is used more than once).
As before, we take the edges of $\mathcal{H}_1$ in arbitrary order.
Suppose that we have reached a hyperedge $h$ 
to which we cannot embed a matching of the desired size.
This implies that $h$ contains
a complete graph $K$ on $s + s(s-1)(t-1)+t$ many vertices. 
If we color the edges of this complete graph according to their
corresponding hyperedge, then we have a proper edge-coloring of $K$.
In particular, $K$ contains a properly edge-colored $K_{s,s(s-1)(t-1)+t}$.
Thus, by Lemma~\ref{rainbowbipar}, $K$ contains a rainbow $K_{s,t}$
which corresponds to a Berge-$K_{s,t}$ in $\mathcal{H}_1$; a contradiction.

For the same reason, $G$ must be $K_{s,s(s-1)(t-1)+t}$-free, thus, applying the
K\H ov\'ari-S\' os-Tur\' an theorem (Theorem~\ref{KST}) we have
$$|E(G)| = O(n^{2-1/s}).$$
Furthermore, as $\mathcal{H}_1$ is $K_{s,t}$-free, Lemma~\ref{anygraph} gives
$$|\mathcal{H}_1| \leq \ex(n,K_{s,t}) = O(n^{2-1/s}).$$

By construction, the number of edges in $G$ is
$$|E(G)| = \sum_{h \in \mathcal{H}_1} \frac{1}{2} \left(|h|-s-s(s-1)(t-1)-t\right) 
= \frac{1}{2}\left(\sum_{h \in \mathcal{H}_1} |h|\right) - \frac{1}{2}\left(s+s(s-1)(t-1)+t\right)|\mathcal{H}_1|.$$

Rearranging terms we get
$$\sum_{h \in \mathcal{H}_1} |h| = 2|E(G)| + (s+(s-1)(t-1)+t)|\mathcal{H}_1| = O(n^{2-1/s}).$$
\end{proof}

Again, as $\mathcal{H}_2$ is $K_{s,t}$-free, Lemma~\ref{anygraph} gives
$$|\mathcal{H}_2| \leq \ex(n,K_{s,t}) = O(n^{2-1/s}).$$
Every hyperedge in $\mathcal{H}_2$ has size less than $s+s(s-1)(t-1)+t$, so clearly
$$\sum_{h \in \mathcal{H}_2} |h| \leq (s+s(s-1)(t-1)+t)|\mathcal{H}_2| = O(n^{2-1/s}).$$
Therefore,
$$\sum_{h \in \mathcal{H}} |h| = 
\sum_{h \in \mathcal{H}_1} |h| + \sum_{h \in \mathcal{H}_2} |h| = O(n^{2-1/s}).$$




%


To complete the proof of Theorem~\ref{bipar} we need to construct a $K_{s,t}$-free hypergraph with edges of size $s+t$ of the appropriate size.
Let $s \leq t$. If $s = 1$, then $K_{s,t}$ is a star. In this case, $\ex(n,K_{1,t})$ is linear in $n$ and it is easy to construct $1+t$-uniform hypergraphs with no Berge-$K_{1,t}$
that satisfy the theorem.
Now we suppose $s \geq 2$ and let $G$ be a $K_{s,t}$-free graph. Let $G'$ be a bipartite subgraph of $G$ with at least half of the total edges and let $A'$ and $B'$ be the partition classes of $G'$. 
Let us replace each vertex $a_i \in A'$ with a set $A_i$ of $s+t-1$ copies of $a_i$ to get a $(s+t)$-uniform hypergraph $\mathcal{H}$. Put $A = \cup_i A_i$ and $B = B'$. Now it remains to confirm that $\mathcal{H}$ has no Berge-$K_{s,t}$.

Suppose $\mathcal{H}$ contains a Berge-$K_{s,t}$, then we can map the hyperedges of the Berge-$K_{s,t}$ into a copy of the graph $K_{s,t}$
such that each edge is contained in the hyperedge that was mapped to it. Let $S$ and $T$ be the classes of $K_{s,t}$.
If $S \subset A$ and $T \subset B$ (or vice versa), then each vertex of $S$ must be in a distinct $A_i$ as each $A_i$ and each vertex of $B$ are contained in exactly one edge.
In this case, $G'$ contains a $K_{s,t}$; a contradiction.

Now either $A$ or $B$ contains a vertex $x \in S$ and a vertex $y \in T$. The vertices $x$ and $y$ are contained in a hyperedge and as $\mathcal{H}$ has no hyperedges containing two vertices in $B$
we must have that $A$ contains $x$ and $y$. No hyperedge of $\mathcal{H}$ contains vertices from $A_i$ and $A_j$ for $i \neq j$, so $x$ and $y$ must both be contained in some $A_i$.
Every vertex of $S$ and $T$ must be contained in a hyperedge with $y$ or $x$, thus each vertex of $S$ and $T$ must be in $A_i$ or $B$. As the size of $A_i$ is $s+t-1$, there must be at least one vertex $z \in S \cup T$ in $B$. 
There is exactly one hyperedge of $\mathcal{H}$ that contains $z$ and any other vertex of $A_i \cup B$. However, the degree of $z$ in the Berge-$K_{s,t}$ is at least $s \geq 2$; a contradiction.
\end{proof}

\section{$C_4$-free hypergraphs}

In this section we prove Theorem~\ref{C4-free}. We begin with a short proof of a weaker upper bound that uses the ideas in Section~\ref{2}.

\begin{prop}
If $\mathcal{H}$ is a $C_4$-free hypergraph, then 
$$\sum_{h \in \mathcal{H}} (|h|-3) \leq \sqrt{3}n^{3/2} + O(n).$$
\end{prop}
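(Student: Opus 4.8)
The plan is to mimic the matching-embedding strategy from Section~\ref{2} but with a quantitative twist tailored to $C_4=K_{2,2}$. For each hyperedge $h$ I want to embed a matching into $h$ so that the matchings across different hyperedges are pairwise edge-disjoint, and so that the resulting simple graph $G$ is $C_4$-free. The obstruction to embedding a large matching in $h$ is that many pairs inside $h$ have already been used; if a complete graph on some vertex subset of $h$ is entirely used by previous hyperedges, I can invoke Lemma~\ref{rainbow} (or directly the bipartite version, Lemma~\ref{rainbowbipar}) to extract a rainbow $C_4$, giving a Berge-$C_4$, a contradiction. The key point is that $C_4$ is small, so I do not need $r^3$ vertices — I only need enough to force a rainbow $K_{2,2}$, and Lemma~\ref{rainbowbipar} with $s=t=2$ gives the threshold $s(s-1)(t-1)+t+s = 2\cdot1\cdot1 + 2 + 2 = 6$. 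So whenever a hyperedge has a used clique on $6$ vertices I get a contradiction, which is what produces the constant $\sqrt{3}$ (via $\sqrt{6}$) in the bound.

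First I would take the hyperedges in arbitrary order and, for each $h$, greedily embed edges (forming a matching within $h$) that have not yet been used in $G$, stopping only when forced. If at some $h$ the embedded matching has size less than $\tfrac12(|h|-6)$, then more than $6$ vertices of $h$ have all their pairs already used by earlier matchings; color each used pair by its originating hyperedge to get a proper edge-coloring of a complete graph on those vertices, and apply Lemma~\ref{rainbowbipar} to find a rainbow $K_{2,2}$, i.e.\ a Berge-$C_4$ — contradiction. Hence every $h$ admits a disjoint matching of size at least $\tfrac12(|h|-6)$, and $G$ is a simple $C_4$-free graph. (Here I treat the $\mathcal{H}_1$ part of edges of size at least $6$; edges of size less than $6$ contribute only $O(n)$ to $\sum_h(|h|-3)$, which is absorbed into the error term.)

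Next I would bound $|E(G)|$. Since $G$ is $C_4$-free on $n$ vertices, the standard Kővári–Sós–Turán bound for $s=t=2$ gives $|E(G)| \le \tfrac12(1+\sqrt{4n-3})\,n/2 = \tfrac12 n^{3/2} + O(n)$; more carefully, the sharp $C_4$ bound is $|E(G)| \le \tfrac12\bigl(1+\sqrt{4n-3}\bigr)\tfrac{n}{2}$, so $|E(G)| \le \tfrac12 n^{3/2}(1+o(1))$. By construction $|E(G)| = \sum_{h\in\mathcal{H}_1}\tfrac12(|h|-6)$, hence $\sum_{h}(|h|-6) \le 2|E(G)| \le n^{3/2} + O(n)$. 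This already yields a bound with constant $1$, which is too weak — it does not give $\sqrt{3}$.

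The main obstacle, and where the proposition's improvement must come from, is that the crude factor $\tfrac12$ in the matching size wastes information: each hyperedge of size $|h|$ actually occupies $\binom{|h|}{2} \sim \tfrac12|h|^2$ pairs, not merely $\tfrac12|h|$, and these pairs all live in the $C_4$-free ``shadow'' host. So instead of embedding a matching I would embed the entire set of used/forbidden pairs into a $C_4$-free multigraph-free structure and count pairs directly: the cleaner approach is to observe that in a $C_4$-free hypergraph any two vertices lie together in at most one hyperedge of size $\ge 4$ (else a Berge-$C_4$ appears via two vertices in two common hyperedges), so the pairs $\binom{h}{2}$ are \emph{disjoint} across hyperedges once we account for the girth condition. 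Summing, $\sum_{h}\binom{|h|}{2} \le \binom{n}{2}$, and then optimizing $\sum(|h|-3)$ subject to $\sum\binom{|h|}{2}\le\binom{n}{2}$ by convexity (concentrating weight on edges of a single size) yields the leading constant $\sqrt{3}$. Concretely, writing $\sum_h |h|(|h|-1) \le n(n-1)$ and using Cauchy–Schwarz or a Lagrange-type optimization against $\sum_h(|h|-3)$ gives $\sum_h(|h|-3) \le \sqrt{3}\,n^{3/2}+O(n)$, which is the claimed bound. The delicate step is justifying that the pair-multiplicity is at most one (i.e.\ translating ``$C_4$-free'' into ``codegree $\le 1$ among large edges'') cleanly enough to run the convexity optimization.
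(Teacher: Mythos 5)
Your first approach is in the same spirit as the paper's proof (greedy edge-disjoint matchings, coloring by hyperedge, Lemma~\ref{rainbowbipar}, then Theorem~\ref{KST}), but it contains errors that are fatal to the constant, and you then abandon it for a reason that is backwards. First, the graph $G$ you build is not $C_4$-free: two disjoint edges coming from the \emph{same} matching can form opposite sides of a $4$-cycle, and two hyperedges are never enough for a Berge-$C_4$, which needs four distinct hyperedges. What is true is that $G$ is $K_{2,4}$-free, because the coloring by hyperedges is proper, so Lemma~\ref{rainbowbipar} turns any $K_{2,4}$ into a rainbow $C_4$ and hence a Berge-$C_4$; this is exactly what the paper proves. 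Second, your dismissal of the correction terms is wrong: hyperedges of size $4$ or $5$ can number $\Theta(n^{3/2})$ (the paper's own lower-bound construction in Theorem~\ref{bipar} with $s=t=2$ is a $C_4$-free $4$-uniform hypergraph with $\Theta(n^{3/2})$ edges), so both $\sum_{|h|<6}(|h|-3)$ and the term $3|\mathcal{H}_1|$ that you silently drop when passing from $\sum(|h|-6)$ back to $\sum(|h|-3)$ are of order $n^{3/2}$, not $O(n)$; done correctly, your ``$-6$'' normalization plus the size split yields a constant around $\sqrt{3}+\tfrac32+1$, far worse than $\sqrt{3}$. Third, note the inversion in your self-assessment: a valid bound with constant $1$ would be \emph{stronger} than $\sqrt{3}\,n^{3/2}$, not weaker; the reason to reject your first computation is that it is invalid, not that it is insufficient. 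The paper avoids all of these losses by embedding $\lceil(|h|-3)/2\rceil$ edges into \emph{every} hyperedge of size at least $4$, with no splitting: the local obstruction is merely four vertices of $h$ all of whose six pairs are already used, and this already forces a Berge-$C_4$ (one may use $h$ itself as one of the four hyperedges, and a color repeated on two disjoint pairs means that that hyperedge contains all four vertices, hence all pairs among them). This gives $|E(H)|\ge\frac12\sum_h(|h|-3)$ with no additive error, and then $K_{2,4}$-freeness plus Theorem~\ref{KST} yields the constant $\frac{\sqrt{3}}{2}\cdot 2=\sqrt{3}$.

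Your fallback ``cleaner approach'' is unsalvageable. The claim that in a $C_4$-free hypergraph two vertices lie in at most one hyperedge of size at least $4$ is false: two hyperedges sharing two vertices form only a Berge-$C_2$, which is not forbidden here. For instance, three copies of the full vertex set, or a sunflower with a two-vertex core and arbitrarily many size-$4$ petals, are $C_4$-free, yet codegrees are unbounded and $\sum_h\binom{|h|}{2}\le\binom{n}{2}$ fails. Worse, even granting that pair-counting inequality, the optimization is miscomputed: subject to $\sum_h\binom{|h|}{2}\le\binom{n}{2}$, taking all hyperedges of size $4$ gives about $n^2/12$ hyperedges and $\sum_h(|h|-3)\approx n^2/12=\Theta(n^2)$, so pair-counting plus convexity can never produce an $n^{3/2}$ bound, let alone the constant $\sqrt{3}$. (A linear $4$-uniform hypergraph, i.e., a partial Steiner system, satisfies the pair constraint yet is full of Berge-$C_4$'s.) The exponent $3/2$ and the constant $\sqrt{3}$ genuinely require the K\H{o}v\'ari--S\'os--Tur\'an step applied to an auxiliary $K_{2,4}$-free graph, which is the heart of the paper's argument.
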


\begin{proof}
We will construct a graph $H$ on the vertex set of $\mathcal{H}$ by replacing each hyperedge $h$ with $\lceil \frac{|h|-3}{2} \rceil$  independent edges on the vertices of $h$. We proceed through the hyperedges in any order and assume that we have colored each matching with a unique color. Note that we may assume all edges of $\mathcal{H}$ are of size at least $4$.

Suppose we are replacing the hyperedge $h$ with a matching of color red. Consider an arbitrary subset of $4$ vertices of $h$ such that no vertex is incident to a red edge. Some of these vertices may be contained in other hyperedges and thus some of the six pairs may already be included in some other matching. However, it is easy to see that if all six pairs are used in macthings, then $\mathcal{H}$ must contain a $C_4$. Thus there is some unused pair that can be colored red. Repeat this step until every subset of $4$ vertices of $h$ is incident to one red edge. At this point we have a matching of $\lceil \frac{|h|-3}{2} \rceil$ red edges in $h$.

Now suppose that $H$ contains a $K_{2,4}$, then it is colored properly by the embedding above. Therefore, by Lemma~\ref{rainbowbipar}, the $K_{2,4}$ must contain a rainbow $C_4$. This corresponds to a Berge-$C_4$ in $\mathcal{H}$ which is a contradition.
Therefore $H$ must be $K_{2,4}$-free. By the K\H{o}v\'ari-S\'os-Tur\'an theorem (Theorem~\ref{KST}), we have that $H$ has at most $\frac{\sqrt{3}}{2}n^{3/2} + O(n)$ edges. Thus,
$$\sum_{h \in \mathcal{H}} \left\lceil \frac{(|h|-3)}{2} \right \rceil \leq \frac{\sqrt{3}}{2}n^{3/2} + O(n)$$
which gives the proposition.
\end{proof}

For the proof of Theorem~\ref{C4-free} we will need some simple Ramsey-type results. Recall that every $2$-edge-coloring of $K_6$ contains a
monochromatic triangle.
Let $K_4^-$ denote a $4$-clique with an edge removed. 

\begin{lemma}\label{ramsey-lemma} 

\begin{enumerate}
\item A $2$-edge-coloring of $K_5$ contains either a red $K_4^-$ or a blue matching of size $2$.
\item A $2$-edge-coloring of $K_6$ contains either a red $K_4^-$ or a blue triangle or a blue matching of size $3$.
\item A $2$-edge-coloring of $K_7$ contains either a red $K_4^-$ or a blue triangle.
\end{enumerate}
\end{lemma}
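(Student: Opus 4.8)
The plan is to read all three parts through a single lens: write $G$ for the blue subgraph, so the red graph is its complement $\overline{G}$, and observe that a red $K_4^-$ is exactly a set of four vertices spanning \emph{at most one} blue edge. Thus in each part I want to either exhibit four vertices inducing at most one edge of $G$, or else produce the prescribed blue object (a matching or a triangle). I will prove the parts in increasing order of difficulty, since the same setup feeds all three.

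For part (1), the hypothesis ``no blue matching of size $2$'' says that $G$ has matching number at most $1$, and I would begin with the elementary structure theorem that such a graph is a subgraph of a star or of a triangle. If $G$ lies inside a star centered at a vertex $v$, the four remaining vertices span no blue edge, a red $K_4$; if $G$ is a triangle on $\{x,y,z\}$, then the two off-triangle vertices together with $x,y$ form four vertices spanning exactly the one blue edge $xy$. In either case a red $K_4^-$ appears, so this part is routine.

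For parts (2) and (3) I would assume there is no red $K_4^-$ and derive the blue structure. In both cases $G$ is triangle-free (part (3) assumes it, and in part (2) a blue triangle is one of the permitted conclusions). Since $R(3,3)=6$, a triangle-free $G$ on at least six vertices contains an independent set of size $3$; moreover an independent set of size $4$ would be four vertices with no blue edge, i.e.\ a red $K_4^-$, which is excluded, so the independence number is exactly $3$. I therefore fix a \emph{maximum} independent set $I=\{a,b,c\}$. By maximality every vertex outside $I$ has a blue neighbor in $I$, and an outside vertex with only one such neighbor would complete a four-set spanning at most one edge; hence I may assume every outside vertex is blue-adjacent to at least two vertices of $I$.

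In part (2) the three outside vertices $W=\{w_1,w_2,w_3\}$ each have at least two neighbors in $I$, and I would produce a blue matching of size $3$ as a perfect matching between $I$ and $W$ via Hall's theorem: the conditions for subsets of $W$ of size $1$ and $2$ are immediate, while the size-$3$ condition $N(W)=I$ follows because an $I$-vertex missed by $W$ forces (by triangle-freeness) $W$ to be independent and yields four vertices with no blue edge. The main work, and the step I expect to be the real obstacle, is part (3), where there are four outside vertices and the argument becomes a careful case analysis. After a pigeonhole step one vertex of $I$, say $a$, has at least three neighbors $w_1,w_2,w_3$, which are independent by triangle-freeness; the fourth outside vertex $w_4$ cannot be adjacent to $a$ (else $W$ is independent), so it is adjacent to both $b$ and $c$. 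If $b$ meets at most one of $w_1,w_2,w_3$ we finish with $\{b,w_1,w_2,w_3\}$; otherwise $b$ meets two of them, and since $b\sim w_4$, triangle-freeness forces $w_4$ to miss those two, leaving $\{w_1,w_2,w_3,w_4\}$ spanning at most one blue edge. Verifying that every branch terminates in four vertices with at most one blue edge is where the bookkeeping is most delicate.
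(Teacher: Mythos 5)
Your proof is correct in all three parts, and it shares the paper's high-level skeleton for parts (2) and (3) -- view the blue graph and its red complement, note that a red $K_4^-$ is exactly a $4$-set spanning at most one blue edge, and invoke $R(3,3)=6$ to produce a blue-independent triple (the paper phrases this as a red triangle) whose interaction with the outside vertices is then analyzed, using in both cases the key fact that every outside vertex sends at least two blue edges into that triple. The differences are tactical but real. For part (1) the paper argues directly (no red $K_4^-$ forces two blue edges on every $4$-set, then a short case split finds two independent ones), whereas you take the contrapositive and quote the structure theorem that a graph with matching number $1$ is a sub-star or sub-triangle; yours is cleaner provided one accepts that folklore fact, which you should state with its two-line proof since the paper is otherwise self-contained. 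For part (2) the paper does an ad hoc case split on how the red edges from the three outside vertices attach to the triangle (ending in either a blue $3$-matching or a blue triangle), while you verify Hall's condition and get the $3$-matching as a perfect matching between $I$ and $W$; Hall's theorem packages the same case analysis more systematically and scales better. For part (3) the paper is shorter than you: among the four outside vertices some edge is blue (else a red $K_4$), and since each endpoint of that blue edge sends at most one red edge to the triangle, some triangle vertex is blue-adjacent to both endpoints, giving a blue triangle immediately; your pigeonhole-plus-cases argument is correct but does more bookkeeping than necessary. Both routes are elementary and complete; yours trades brevity in part (3) for a more uniform mechanism (maximum independent set, degree conditions into it) across parts (2) and (3).
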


\begin{proof}
1. Suppose there is no red $K_4^-$, then among four vertices there are at least two blue edges. If they are independent, we are done. Therefore, suppose the vertex $x$ is incident to two blue edges.
Then, among the other four vertices there are at least two blue edges. Among these four blue edges there must be two that are independent.

\smallskip

2. If there is no blue triangle on $6$ vertices, then there is a red triangle $xyz$.
For every vertex $a,b,c$ not in the triangle, there is at most one red edge going to the triangle as otherwise we have a red $K_4^-$.
First suppose that $a,b,$ and $c$ are each connected by red edges to different vertices of $xyz$. In this case it is easy to find a blue matching of size $3$ (between $xyz$ and $a,b,c$).
Therefore, we may suppose (without loss of generality) that $a$ and $b$ are connected by red edges to the same vertex in $xyz$, say $x$. So, either $y$ or $z$ is connected to $a,b,c$ by only blue edges.
Considering that there is no red $K_4^-$ on $a,b,c,x$, there is at least one blue edge among $a,b,c$. Among these four blue edges there must be a blue triangle.

\smallskip

3. If there is no blue triangle on $7$ vertices, then there is a red triangle $xyz$.
For every vertex $a,b,c,d$ not in the triangle, there is at most one red edge going to the triangle as otherwise we have a red $K_4^-$.
Furthermore, there must be a blue edge among $a,b,c,d$, say $ab$. There is a vertex among $xyz$ that is not connected by a red edge to $a$ or $b$ therefore we have a blue triangle.
\end{proof}

We are now ready to give a proof of Theorem~\ref{C4-free}.

\begin{proof}[Proof of Theorem~\ref{C4-free}]

We begin with the upper bound.
We will construct a graph $H$ on the vertex set of $\mathcal{H}$ similar to the previous section. 
For each hyperedge $h$ we will embed
$|h|-3$ edges on the vertices of $h$ such that the collection of edges in $h$ consists of pairwise disjoint $K_3$s and (up to three) $K_2$s.
Let us proceed through the hyperedges in arbitrary order.
At hyperedge $h$ we embed edges disjointly from the previously embedded edges.
To show that such an embedding is possible we need a simple claim.

\begin{claim}\label{no-5-edges}
Before embedding edges into $h$, on any $4$ vertices in $h$ there are at most $4$ edges already embedded.
\end{claim}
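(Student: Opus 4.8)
The plan is to argue by contradiction. Suppose that, just before embedding edges into $h$, there are four vertices $a,b,c,d \in h$ spanning at least five already-embedded edges. Since four vertices span at most $\binom{4}{2}=6$ edges, these contain a $K_4^-$; without loss of generality the (possibly) missing edge is $cd$, so that $ab, ac, bc, ad, bd$ are all present. I will produce a Berge-$C_4$ in $\mathcal{H}$, contradicting $C_4$-freeness. I record two facts I will use repeatedly. First, every embedded edge $e$ carries a \emph{color}, namely the hyperedge it was embedded into; this hyperedge was processed before $h$, hence differs from $h$, and it contains both endpoints of $e$. Because inside each hyperedge the embedded edges form disjoint $K_3$s and $K_2$s, two edges of the same color that share a vertex must be two sides of a common monochromatic triangle, so the third side is present with the same color. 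Second, since $a,b,c,d \in h$, the hyperedge $h$ contains every pair among them, so in building a Berge-$C_4$ on these four vertices I may assign $h$ to any one edge of the cycle and then need only three further \emph{distinct} colors for the remaining edges.

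First I would examine the four-cycle $a$-$c$-$b$-$d$-$a$, whose edges $ac, bc, bd, ad$ are all present. If these four edges carry at least three distinct colors, then some three of them are rainbow; I assign the remaining cycle edge to $h$ and the other three to their distinct colors, yielding a Berge-$C_4$. So I may assume these four edges use at most two colors. The vertex-sharing pairs $\{ac,ad\}$ and $\{bc,bd\}$ cannot be monochromatic, since a common color would force a monochromatic triangle on $acd$ or on $bcd$, hence the absent edge $cd$. Tracking these two constraints shows the only admissible two-coloring is the alternating one, with $ac,bd$ one color and $bc,ad$ the other; the competing pattern, which would force the triangles $abc$ and $abd$, is impossible because it would force $ab$ to receive two different colors.

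The main obstacle is exactly this alternating case, where no rainbow four-cycle survives among the embedded edges. Here I would exploit the complementary part of the color structure: two \emph{disjoint} edges of the same color both lie in a single hyperedge. Writing $\alpha$ for the common color of $ac$ and $bd$, the hyperedge $H_\alpha$ then contains $a,c$ and $b,d$, hence all of $a,b,c,d$; likewise the common color $\beta$ of $bc$ and $ad$ gives a distinct hyperedge $H_\beta$ containing all four vertices. Together with $h$ these are three distinct hyperedges each containing $\{a,b,c,d\}$. The color $\gamma$ of $ab$ provides a fourth hyperedge $H_\gamma \ni a,b$, and $\gamma \notin \{\alpha,\beta\}$ (otherwise a vertex-sharing monochromatic pair would again force a triangle through $cd$). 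Assigning $ab \to H_\gamma$, $bc \to H_\beta$, $cd \to H_\alpha$, and $da \to h$ along the cycle $a$-$b$-$c$-$d$-$a$ then uses four distinct hyperedges, each containing its assigned pair, and is therefore a Berge-$C_4$ — the desired contradiction, which establishes the claim.
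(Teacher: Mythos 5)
Your overall strategy is close in spirit to the paper's, and most of the individual steps are sound, but there is a genuine gap: your ``without loss of generality'' is not one. The hypothesis allows the four vertices $a,b,c,d$ to span all \emph{six} embedded edges, and in that case the edge you designate as missing, $cd$, is in fact present. Your exclusion of the monochromatic vertex-sharing pairs $\{ac,ad\}$ and $\{bc,bd\}$ rests precisely on that absence: you argue that a common color would force $cd$ to be embedded, ``hence the absent edge $cd$'' --- but if $cd$ is already embedded, this is no contradiction at all. Consequently the configuration in which all six pairs are embedded and, say, $ac,ad,cd$ form a monochromatic triangle is never treated, and your reduction to the alternating two-coloring does not go through. (Your later parenthetical that $\gamma\notin\{\alpha,\beta\}$ ``would force a triangle through $cd$'' has the same defect, though there the conclusion is easy to justify correctly: $ab$ colored $\alpha$ would force $bc$ or $ad$ to also receive color $\alpha$, contradicting that each pair is embedded at most once.)

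The gap is repairable using the two facts you already isolated, applied more carefully: each pair of vertices is embedded at most once, so every edge has a \emph{unique} color; and within one hyperedge the embedded edges form \emph{vertex-disjoint} triangles and single edges. If $ac$ and $ad$ are both color $\alpha$, then $cd$ is embedded with color $\alpha$; then neither $bc$ nor $bd$ can be color $\alpha$ (each shares a vertex with the color-$\alpha$ triangle $acd$), so under your at-most-two-colors assumption both get the second color $\beta$, which forces $cd$ to be embedded with color $\beta\neq\alpha$ as well --- impossible. Note that the paper sidesteps the issue entirely: it works only with five guaranteed edges (a triangle $xyz$ plus the two edges $wx,wz$) and splits on whether that triangle is monochromatic, never invoking the absence of the sixth edge. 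Otherwise your endgame --- two disjoint same-colored edges placing all four vertices inside one hyperedge, then assigning $ab\to H_\gamma$, $bc\to H_\beta$, $cd\to H_\alpha$, $da\to h$ --- is correct and closely mirrors the conclusion of the paper's second case.
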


\begin{proof}\renewcommand{\qedsymbol}{$\blacksquare$}
Let $T = \{x,y,z,w\}$ be a set of $4$ vertices in $h$ with at least $5$ edges.
For convenience we say that the edges already embedded into $T$ are each colored corresponding to their hyperedge.
Note that if two incident edges are embedded into $T$ and they have the same color, then the third edge that forms a triangle must also be embedded into $T$ and have the same color.
$T$ must contain a triangle $xyz$ and the edges $wx$ and $wz$. 

First suppose the triangle is monochromatic with color $1$. Then the other two edges must have two new colors $2$ and $3$. Then it is easy to see
that there is a path on three edges with colors $1,2,3$. Therefore, the hyperedges corresponding to colors $1,2,$ and $3$ and $h$ form a Berge-$C_4$.

Now, if there is no monochromatic triangle in $T$, then the edge-coloring is proper. Therefore, the triangle is rainbow. 
Without loss of generality suppose that $xy$ is color $1$, $yz$ is $2$ and $xz$ is $3$. 
If there is a fourth color on either of the other two edges, then there is a path on three edges with three different colors. It is easy to see that the
three hyperedges corresponding to colors $1,2,$ and $3$, together with $h$ form a Berge-$C_4$. 
If only colors $1,2,$ and $3$ are used then, $wx$ must be color $2$ and $wz$ must be color $1$. 
Therefore, the hyperedges corresponding to colors $1$ and $2$ contain all vertices of $T$
and the hyperedge corresponding to color $3$ contains $x$ and $z$.
These three hyperedges and $h$ form a Berge-$C_4$.
\end{proof}

Now we show that the desired embedding is indeed possible. Suppose we are trying to embed edges into the hyperedge $h$. 
At this point some edges may already be embedded into $h$. Recall that we would like to embed $|h|-3$ previously unused edges into $h$ such that
these edges form triangles and (up to three) independent edges.
By Claim~\ref{no-5-edges} there is no $K_4^-$ among the already embedded edges.
So if $h$ has size $4$, there is an available edge to embed.
If $h$ has size $5$, then by Lemma~\ref{ramsey-lemma} there must be an available matching of size $2$. If $h$ has size $6$,
then by Lemma~\ref{ramsey-lemma} there must be an available triangle or matching of size $3$.
If $h$ has size at least $7$, then by Lemma~\ref{ramsey-lemma} there must be an available triangle. So embed the triangle and on the remaining $|h|-3 \geq 4$ vertices
we can repeat the above procedure.

In this way we have embedded at least $|h|-3$ unique edges into each hyperedge of $\mathcal{H}$. The graph spanned by these edges is $H$. 
Thus 
$$\sum_{h \in \mathcal{H}} (|h|-3) = |E(H)|.$$ 
Furthermore, for each hyperedge, these unique edges form a set of independent triangles with at most three independent edges.

\begin{claim}\label{no-K27}
$H$ contains no $K_{2,7}$.
\end{claim}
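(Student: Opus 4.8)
The plan is to show that if $H$ contained a $K_{2,7}$, we could extract from it a rainbow $C_4$ under the proper edge-coloring inherited from the hyperedge-labeling, which would give a Berge-$C_4$ in $\mathcal{H}$ and contradict that $\mathcal{H}$ is $C_4$-free. The obstacle compared to the earlier $K_{2,4}$ argument (in the Proposition) is that the edges of $H$ embedded into a single hyperedge $h$ are no longer a matching: they form disjoint triangles together with up to three independent edges. Hence the coloring of $H$ by hyperedge-label is \emph{not} a proper edge-coloring, and two incident edges of $H$ may share a color (precisely when they belong to the same embedded triangle). So I cannot invoke Lemma~\ref{rainbowbipar} directly on a $K_{2,4}$; I must account for the triangles.

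First I would set up notation: suppose $H$ contains a $K_{2,7}$ with the two-vertex side $\{u,v\}$ and the seven-vertex side $W = \{w_1,\dots,w_7\}$. Each edge of this $K_{2,7}$ carries a color equal to the hyperedge of $\mathcal{H}$ into which it was embedded. The key structural fact I would exploit is that within any single color class, the edges embedded into that hyperedge form disjoint triangles plus at most three independent edges. This means that if two edges of the $K_{2,7}$, say $uw_i$ and $vw_j$, share a color $c$, then they come from the same hyperedge $h_c$, and within $h_c$ they either form part of the same triangle (so the third side $uv$ or $w_iw_j$ is also in color $c$ and also embedded) or they are two independent edges of the at-most-three allowed. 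The crucial point is that for a fixed color $c$, only a bounded number of the $14$ edges of the $K_{2,7}$ can receive color $c$: since a triangle uses three mutually incident vertices, and our bipartite structure restricts incidences, I would argue each color appears on at most two or three of the $14$ edges, and more importantly that incident same-colored pairs are confined.

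The core of the argument is then a counting/pigeonhole step: I want to find, among the $14$ edges of the $K_{2,7}$, a rainbow $C_4$, i.e. a $4$-cycle $u\,w_i\,v\,w_j\,u$ whose four edges $uw_i, w_iv, vw_j, w_jv$ all have distinct colors. A rainbow $C_4$ corresponds to a genuine Berge-$C_4$ because four distinct hyperedges each contain one of the four required pairs. Equivalently, I would apply Lemma~\ref{rainbowbipar} after arguing that the only failures of properness are localized to triangles; since $7 = s(s-1)(t-1)+t$ is exactly the threshold for $s=2$, $t=?$ I would check the arithmetic carefully (for $s=2$ the bound is $2\cdot 1\cdot(t-1)+t = 3t-2$, so $K_{2,7}$ corresponds to a value that must be reconciled with obtaining a rainbow $C_4 = K_{2,2}$). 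Concretely, I would show that by deleting at most a few vertices $w_i$ involved in same-hyperedge triangles we retain a properly edge-colored $K_{2,t'}$ with $t' \ge 2$, and then Lemma~\ref{rainbowbipar} yields the rainbow $K_{2,2}=C_4$.

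The hard part will be bookkeeping the triangles: I must bound how many vertices $w_i$ of the seven-element side can be ``spoiled'' by lying in a common embedded triangle with $u$ or $v$. Each embedded triangle that meets both $u$ and $v$ forces $uv$ to be an edge of $H$ in that color, and each hyperedge contributes disjoint triangles, so a single color can spoil at most a controlled number of the $w_i$; summing over the offending colors and checking that $7$ is large enough to leave at least two ``clean'' vertices $w_i, w_j$ with four distinct colors on the cycle $u w_i v w_j$ is the decisive inequality. I expect the constant $7$ (rather than $4$) is chosen precisely so that, after discarding the at-most-three independent edges and the triangle-spoiled vertices per color, enough of $W$ survives to guarantee the rainbow $C_4$.
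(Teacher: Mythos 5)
Your plan has a genuine gap: it equates producing a Berge-$C_4$ with producing a rainbow $C_4$ inside the $K_{2,7}$, and a rainbow $C_4$ need not exist at all. Concretely, label the two-vertex side $x,y$ and consider four hyperedges whose embedded edges are: color $1$, the two disjoint triangles $xz_1z_6$ and $yz_2z_5$; color $2$, the triangles $xz_2z_3$ and $yz_1z_4$; color $3$, the triangles $xz_4z_5$ and $yz_3z_6$; color $4$, the single triangle $xyz_7$. This is consistent with every structural fact you invoke (each color class is a union of vertex-disjoint triangles and at most three independent edges), it puts all $14$ edges $xz_i,yz_i$ into $H$, and the pair of colors seen at each $z_i$ with $i\le 6$ is a $2$-subset of $\{1,2,3\}$, while both edges at $z_7$ have color $4$. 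Since any two $2$-subsets of $\{1,2,3\}$ intersect, every $4$-cycle $xz_iyz_j$ repeats a color: there is no rainbow $C_4$ anywhere in this $K_{2,7}$, so no amount of deleting ``spoiled'' vertices and invoking Lemma~\ref{rainbowbipar} can finish. The paper's proof succeeds precisely because in its hard case (its Case 2) it abandons rainbowness: it exhibits a Berge-$C_4$ in which one of the four hyperedges covers a pair of vertices that is not an embedded edge of $H$ at all, using the fact that a hyperedge contains every vertex of every edge embedded in it. In the configuration above, the color-$2$ hyperedge contains both $z_1$ and $z_3$, so the hyperedges of colors $1,4,3,2$ cover the pairs $\{z_1,x\},\{x,y\},\{y,z_3\},\{z_3,z_1\}$ respectively and form a Berge-$C_4$ even though $z_1z_3$ is not an edge of $H$. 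This ``hyperedge bigger than its embedded edges'' idea is exactly what your proposal is missing.

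There are also quantitative errors inside your framework. For $s=t=2$, Lemma~\ref{rainbowbipar} needs a properly edge-colored $K_{2,4}$ (since $s(s-1)(t-1)+t=4$), not $K_{2,t'}$ with $t'\ge 2$: a properly colored $K_{2,3}$ can avoid rainbow $C_4$'s entirely (color the three edges at $x$ with $1,2,3$ and the corresponding edges at $y$ with $2,3,1$). Moreover, your deletion step cannot guarantee four clean vertices. At most one $z_i$ can form a monochromatic triangle with $x$ and $y$ (two such triangles would share the edge $xy$), but after deleting it the remaining conflict graph on six vertices, whose edges are pairs $z_i,z_j$ lying in a common embedded triangle with $x$ or with $y$, is a union of two matchings and can be a $6$-cycle, whose maximum independent set has size $3$. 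So even the corrected reduction bottoms out at a properly colored $K_{2,3}$, which is too small for the lemma, and, as the first paragraph shows, the obstruction is not merely one of counting.
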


\begin{proof}\renewcommand{\qedsymbol}{$\blacksquare$}
Suppose $H$ contains a $K_{2,7}$. We will show that there is a Berge-$C_4$ in the original hypergraph $\mathcal{H}$.
As before, let us color the edges of $H$ according to the hyperedge of $\mathcal{H}$ into which they were embedded.
Let $A = \{x,y\}$ be the class of size $2$ and $B=\{z_1,z_2,\dots,z_7\}$ be the class of size $7$.

\smallskip

{\bf Case 1.} For every vertex in $B$ the two incident edges in $K_{2,7}$ are different colors. Observe that there is a color, say $1$, such that only one edge incident to $x$ is color $1$. Call this edge $xz_1$. The edge $yz_1$ is of a different color, say $2$.
Observe that there are at most $5$ other vertices in $B$ connected to $x$ or $y$ by edges of color $1$ or $2$. Let $z_7$ be the remaining vertex in $B$. Clearly, $xz_7$ and $yz_7$ must be two new colors. Thus the cycle $xz_1yz_7$ is rainbow 
which corresponds to a Berge-$C_4$; a contradiction.

\smallskip

{\bf Case 2.} There is a vertex $z_1 \in B$ such that $xz_1$ and $yz_1$ are both the same color, say $1$. In this case, the edge $xy$ is present in the graph $H$ and has color $1$, therefore
there is only one such $z_i$. Thus $xz_2$ and $yz_2$ are colored $2$ and $3$, respectively. So there are at most $3$ other vertices in $B$ connected by an edge of color $2$ to $x$ or $y$ and at most $1$ other vertex in $B$
connected by an edge of color $3$ to $y$. Let $z_7$ be the remaining vertex and observe that $yz_7$ must be of color $4$. The edge $xz_7$ cannot be color $1$, $2$, or $4$. If $xz_7$ is not color $3$, then the cycle $xz_7yz_2$ is rainbow
which corresponds to a Berge-$C_4$; a contradiction. Therefore, $xz_7$ is color $3$. Thus $z_2$ and $z_7$ are both in the hyperedge corresponding to color $3$. This hyperedge together with the three distinct hyperedges containing the edges $z_2x$, $xy$, and $yz_7$
form a Berge-$C_4$.
\end{proof}

By Claim~\ref{no-K27} and the K\H{o}v\'ari-S\'os-Tur\'an theorem (Theorem~\ref{KST}), we have
$$\sum_{h \in \mathcal{H}} (|h|-3) = |E(H)| \leq \frac{\sqrt{6}}{2}n^{3/2} + O(n).$$
This completes the proof of the upper bound.

Now it remains to prove the lower bound.
Lazebnik and Verstra\"ete \cite{LaVe} constructed an $n$-vertex hypergraph $\mathcal{G}$ with all hyperedges of size $3$ and no Berge-$C_2$, Berge-$C_3$, or Berge-$C_4$ (i.e.\ girth $5$ in the Berge sense) and
$$\frac{1}{6}n^{3/2} + o(n^{3/2})$$
many hyperedges (see Theorem~\ref{LV-girth}).

Starting with the above construction, we replace each vertex with three copies of it to get a $9$-uniform hypergraph $\mathcal{H}$.
Now we show that $\mathcal{H}$ has no Berge-$C_4$. If it contains a Berge-$C_4$ then there are four vertices $a,b,c,d$ and four hyperedges such that
$\{a,b\}, \{b,c\},\{c,d\},\{d,a\}$ are in distinct hyperedges. If $a,b,c,d$ are copies of four distinct vertices in $\mathcal{G}$, then $\mathcal{G}$ contains a Berge-$C_4$; a contradiction. Thus,
at least two of $a,b,c,d$ are copies of the same vertex in $\mathcal{G}$. Without loss of generality there are two cases: either $a=b$ or $a=c$ in $\mathcal{G}$. In the first case
it is easy to see that there is either a Berge-$C_3$ or Berge-$C_2$ in $\mathcal{G}$; a contradiction. Similarily, in the second case we will get a Berge-$C_2$; a contradiction.
Therefore $\mathcal{H}$ has no Berge-$C_4$.

If $\mathcal{H}$ has $n$ vertices, then the number of edges in $\mathcal{H}$ is
$$\frac{1}{6}\left( \frac{n}{3} \right)^{3/2} + o(n^{3/2}).$$
Therefore,
$$\sum_{h \in \mathcal{H}} (|h|-3) \geq \frac{1}{3\sqrt{3}}n^{3/2} + o(n^{3/2}).$$
\end{proof}

\section{Linear hypergraphs} 

In the previous sections we restricted our attention to $F$-free hypergraphs with edges of size at least $|V(F)|$. This condition allowed our theorems to hold even for hypergraphs that have multiple copies of the same hyperedge (multi-hyperedges).
A hypergraph with multi-hyperedges can have arbitrarily many copies of a hyperedge of size less than $|V(F)|$ and remain $F$-free.
Therefore, we need to restrict our attention to \emph{simple} hypergraphs, i.e., hypergraphs with no multi-hyperedges.


In the proof of Lemma~\ref{anygraph} the assumption on the size of the hyperdges is crucial. Indeed there is no such general statement 
comparing the number of edges of an $F$-free hypergraph and $\ex(n,F)$ in this case. For example the complete $3$-uniform $3$-partite hypergraph is clearly $K_4$-free but contains $\Theta(n^3)$ many hyperedges. 

%
%

Unfortunately, in general for arbitrary $F$-free simple hypergraphs we know very little. 
One additional condition which can help in this case is if the hypergraph is $C_2$-free, i.e., two edges intersect in at most one point ($C_2$-free hypergraphs are typically called \emph{linear}). 
Given a $C_2$-free hypergraph $\mathcal{H}$ we can form a graph $G$ by replacing each hyperedge $h$ with an unique edge contained in $h$. Clearly the resulting graph $G$ will not contain $F$ as a subgraph. This gives the following simple observation,

\begin{obs} 
Let $F$ be a graph and let $\mathcal{H}$ be an $F$-free and $C_2$-free hypergraph. If every hyperedge in $\mathcal{H}$ has size at least $2$, then $|\mathcal{H}| \le \ex(n,F)$.
\end{obs}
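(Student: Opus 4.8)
The plan is to mimic the greedy embedding argument from Lemma~\ref{anygraph}, but to exploit the $C_2$-free (linear) hypothesis so that the size restriction ``each hyperedge has size at least $|V(F)|$'' can be relaxed all the way down to ``size at least $2$.'' First I would observe that since $\mathcal{H}$ is $C_2$-free, any two distinct hyperedges share at most one vertex. This is the crucial structural fact: it means that if I pick any pair of vertices $\{u,v\}$, then at most one hyperedge of $\mathcal{H}$ can contain both $u$ and $v$.

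Next I would construct a graph $G$ on the vertex set of $\mathcal{H}$ by choosing, for each hyperedge $h \in \mathcal{H}$, a single pair $\{u,v\} \subseteq h$ and placing the edge $uv$ into $G$. Because each hyperedge has size at least $2$, such a pair always exists, so this assignment $h \mapsto \{u,v\}$ is well defined for every hyperedge. The key point, and the reason no greedy ``avoid-a-used-edge'' step is needed here (in contrast to Lemma~\ref{anygraph}), is that the $C_2$-free condition guarantees the resulting map is automatically injective: if two distinct hyperedges $h \neq h'$ were both assigned the same pair $\{u,v\}$, then both $h$ and $h'$ would contain the two vertices $u$ and $v$, contradicting linearity. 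Hence $|E(G)| = |\mathcal{H}|$.

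Finally I would argue that $G$ is $F$-free. Suppose instead that $G$ contains a copy of $F$. Each edge $e = \{u,v\}$ of this copy came from a distinct hyperedge $h_e$ of $\mathcal{H}$ with $\{u,v\} \subseteq h_e$, and these hyperedges are pairwise distinct precisely because the assignment is injective. Taking the bijection sending each edge $e$ of the copy of $F$ to its hyperedge $h_e$ then exhibits a Berge-$F$ as a subhypergraph of $\mathcal{H}$, contradicting that $\mathcal{H}$ is $F$-free. Therefore $G$ is an $F$-free graph on $n$ vertices, so $|\mathcal{H}| = |E(G)| \le \ex(n,F)$, as claimed.

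I do not expect any serious obstacle in this argument; the observation is essentially immediate once the linearity is used to force injectivity of the edge-embedding. The only subtlety worth stating carefully is exactly why the greedy conflict-resolution of Lemma~\ref{anygraph} (and its reliance on the large-edge-size hypothesis to invoke the rainbow-clique lemma) becomes unnecessary: the $C_2$-free hypothesis does the work of guaranteeing distinctness of embedded edges for free, which is precisely what lets us drop the size requirement from $|V(F)|$ to $2$.
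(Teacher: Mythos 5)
Your proposal is correct and follows essentially the same route as the paper: the paper also forms a graph by replacing each hyperedge with an edge contained in it, noting that linearity makes the chosen edges automatically distinct and that the resulting graph must be $F$-free. Your write-up simply spells out the injectivity and the Berge-$F$ contradiction in more detail than the paper's one-line justification.
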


In order to improve the bound, we can attempt to replace each hyperedge of $\mathcal{H}$ with several graph edges. However, this must be done in such a way that $G$ does not contain $F$ as a subgraph.
If $\mathcal{H}$ is $3$-uniform with girth at least $5$, we can create a graph $G$ by replacing each hyperedge $h$ with the three graph edges in $h$. The resulting graph does not contain $C_4$ (otherwise $\mathcal{H}$ is not $C_4$-free).
In this case, using the fact that $\ex(n,C_4) = \frac{1}{2}n^{3/2} + o(n^{3/2})$ we get the upper bound from Theorem~\ref{LV-girth}. We prove the following slightly more general result,

\begin{prop} 
If $\mathcal{H}$ is a $3$-uniform hypergraph with girth $g\geq 5$, then 
$$|\mathcal{H}| \le \frac{1}{3}\ex(n,C_4,C_5,\dots,C_{g-1}).$$
\end{prop}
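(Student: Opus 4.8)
The plan is to build a graph $G$ on $V(\mathcal{H})$ by replacing each hyperedge $h = \{a,b,c\}$ with the three graph edges $ab$, $bc$, $ca$ (the triangle on $h$), and then to show that $G$ avoids every cycle $C_4, C_5, \dots, C_{g-1}$. Since $\mathcal{H}$ has girth at least $5$ it is in particular $C_2$-free, so any two hyperedges meet in at most one vertex; consequently the triangles we embed are pairwise edge-disjoint and $|E(G)| = 3|\mathcal{H}|$. It is also convenient to record that, because the triangles are edge-disjoint, every edge of $G$ lies in a \emph{unique} hyperedge, so each edge of $G$ carries a well-defined label naming the hyperedge that produced it.

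The heart of the argument is the claim that $G$ contains no $C_i$ for $4 \le i \le g-1$. I would argue by contradiction: suppose $C = v_1 v_2 \cdots v_i v_1$ is such a cycle. First I would observe that any two edges of $C$ coming from the same hyperedge must be consecutive on $C$: two edges of a single triangle always share a vertex, and in a cycle the two edges at a vertex are consecutive; moreover a triangle has only three edges on three vertices, so at most two of them can appear on $C$. Hence the edges of $C$ split into \emph{blocks}, where each block is a maximal run of consecutive edges sharing a common hyperedge, and each block consists of either one edge or two edges. Since a hyperedge's entire contribution to $C$ is a single consecutive run, distinct blocks come from distinct hyperedges, so the blocks are in bijection with a set of \emph{distinct} hyperedges.

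From this block decomposition I would read off a Berge cycle in $\mathcal{H}$. For a single-edge block $v_j v_{j+1}$ with hyperedge $h$ I keep the segment $v_j \to v_{j+1}$ and assign it $h$; for a two-edge block $v_j v_{j+1}, v_{j+1} v_{j+2}$ with $h = \{v_j, v_{j+1}, v_{j+2}\}$ I \emph{shortcut}, deleting the interior vertex $v_{j+1}$ and assigning the segment $v_j \to v_{j+2}$ to $h$, which is legitimate since $\{v_j, v_{j+2}\} \subset h$. The surviving boundary vertices, taken in cyclic order and joined through the distinct hyperedges of their blocks, form a Berge-$C_\ell$, where $\ell$ is the number of blocks. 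Writing $p$ for the number of two-edge blocks and $s$ for the number of one-edge blocks, we have $2p + s = i$ and $\ell = p + s$, so $\ell = i - p \le i \le g-1$; and since every block has at most two edges while $i \ge 4$, the number of blocks satisfies $\ell \ge \lceil i/2 \rceil \ge 2$. Thus $\mathcal{H}$ contains a Berge-$C_\ell$ with $2 \le \ell \le g-1$, contradicting the girth assumption, and the claim follows. Finally, $G$ being $\{C_4, \dots, C_{g-1}\}$-free gives $3|\mathcal{H}| = |E(G)| \le \ex(n, C_4, C_5, \dots, C_{g-1})$, which is the stated bound.

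The main obstacle, I expect, is the bookkeeping in the block-to-Berge-cycle translation --- in particular verifying that the hyperedges assigned to different blocks are genuinely distinct (so that the bijection required by the Berge definition exists) and that the shortcutting cannot collapse the cycle below length $2$. Both rely crucially on the fact that the embedded triangles are edge-disjoint, which is exactly where the $C_2$-free (linear) hypothesis, implied by girth $\ge 5$, enters.
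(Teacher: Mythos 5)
Your proof is correct and follows essentially the same route as the paper: replace each hyperedge by its triangle and contract pairs of same-hyperedge edges on a short cycle to reach a forbidden Berge cycle. Your block decomposition is in fact a somewhat more careful rendering of the paper's iterative shortcutting argument, since it explicitly handles the degenerate outcome $\ell = 2$ (a Berge-$C_2$, excluded by linearity), a case the paper's phrase ``shorter non-triangle cycle with each edge from a different hyperedge'' glosses over.
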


\begin{proof} 
Let us form a graph $G$ by replacing each hyperedge $h$ with the three graph edges contained in $h$. It remains to show that $G$ does not contain a cycle $C$ of length between $4$ and $g-1$. 
The assumption that $\mathcal{H}$ has girth $g$ implies that $G$ has no cycle of length less than $g$ (not even a triangle) such that all of its edges come from different hyperedges of $\mathcal{H}$. 

Therefore, if $C$ is a cycle of length between $4$ and $g-1$, then $C$ contains two edges from the same hyperedge of $\mathcal{H}$. These two edges must be incident, so call them $ab$ and $bc$. Now we can replace these edges with edge $ac$ to get a shorter cycle. If we repeat this for any pair of edges of $C$ in the same hyperedge of $\mathcal{H}$ we end up with a shorter non-triangle cycle which has each edge from a different hyperedge of $\mathcal{H}$; a contradiction.
\end{proof}

\bibliographystyle{plain}
\bibliography{kst}

\end{document}